\def\wtd{\widetilde}
\def\what{\widehat}
\DeclareMathOperator{\eig}{eig}
\DeclareMathOperator{\rank}{rank}
\DeclareMathOperator{\sv}{sv}
\DeclareMathOperator{\F}{F}
\DeclareMathOperator{\HH}{H}
\DeclareMathOperator{\UI}{ui}
\def\bbC{\mathbb{C}}
\def\bbR{\mathbb{R}}
\def\cR{{\cal R}}
\def\cX{{\cal X}}
\def\bg{\pmb{g}}
\def\bu{\pmb{u}}
\def\bv{\pmb{v}}
\def\bx{\pmb{x}}
\def\by{\pmb{y}}
\def\bz{\pmb{z}}
\def\bL{\pmb{L}}
\def\bT{\pmb{T}}
\def\bphi{\pmb{\phi}}
\def\scrB{\mathscr{B}}
\def\sss{\scriptscriptstyle}
\newcommand\munderbar[1]{%
  \underaccent{\bar}{#1}}
\DeclareMathOperator{\ext}{ext}
\newtheorem{theorem}{Theorem}[section]
\newtheorem{lemma}{Lemma}[section]
\newtheorem{corollary}{Corollary}[section]
\theoremstyle{definition}
\newtheorem{remark}{Remark}[section]
\def\sss{\scriptstyle}
\numberwithin{equation}{section}
\numberwithin{figure}{section}
\numberwithin{table}{section}
\newcounter{question}
\title{On Stewart's Perturbation Theorem for SVD}
\author{
Ren-Cang Li%
\thanks{Department of Mathematics, University of Texas at Arlington, Arlington, TX 76019-0408, USA.
        Supported in part by NSF  DMS-2009689.
        Email: {\tt rcli@uta.edu}.}
\and
Ninoslav Truhar%
\thanks{Department of Mathematics, J. J. Strossmayer University of Osijek, Trg Ljudevita Gaja 6, 31000 Osijek, Croatia.
        Email: {\tt ntruhar@mathos.hr}.}
\and
Lei-Hong Zhang%
\thanks{School of Mathematical Sciences, Soochow University, Suzhou 215006, Jiangsu, China.
        Supported in part by the National Natural Science Foundation of China NSFC-11671246
        NSFC-12371380, and  Jiangsu Shuangchuang Project JSSCTD202209.
        Email: {\tt longzlh@suda.edu.cn}.}
}
\date{\today}
\begin{document}

\maketitle

\begin{abstract}
This paper establishes a variant of Stewart's theorem [Theorem~6.4 of Stewart, {\em SIAM Rev.}, 15:727--764, 1973]
for the singular subspaces associated with the SVD of a matrix
subject to perturbations. Stewart's original version uses both the Frobenius and spectral norms,
whereas the new variant uses the spectral norm and any unitarily invariant norm that offer choices
per convenience of particular applications and lead to sharper bounds
than that straightforwardly derived from Stewart's original theorem with the help of the well-known equivalence inequalities
between matrix norms. Of interest in their own right, bounds on
the solution to two couple Sylvester equations are established for a few different circumstances.

\bigskip
\noindent
{\bf Keywords:}  SVD, perturbation, singular subspaces, coupled Sylvester equations

\smallskip
\noindent
{\bf Mathematics Subject Classification}  65F15, 15A18
\end{abstract}

\section{Introduction}\label{sec:intro}
In \cite{stew:1973}, Stewart established a perturbation theory
for the singular subspaces associated with the SVD of
a matrix $G\in\bbC^{m\times n}$ slightly perturbed. In the same paper he also investigated
the eigenspace of a matrix and the deflating subspace of a regular matrix pencil subject to perturbations.
But in this paper, we limit our scope
to one of his theorems, namely, \cite[Theorem~6.4]{stew:1973} on SVD, where both the Frobenius and spectral norms are used to measure perturbations.
Our goal is  to establish a more general and yet sharper version of \cite[Theorem~6.4]{stew:1973} in the spectral norm and any unitarily invariant norm.
Our new version
offers flexibility in its applications, for example, the version with the unitarily invariant norm also set to the spectral norm
is more convenient to use in our recent work \cite{zhli:2024} for
analyzing the quality of a reduced order model by approximate balanced truncation \cite{anto:2005,beoc:2017}.
Even in the case of using both the Frobenius and spectral norms, our results are slightly better than
Stewart's \cite[Theorem~6.4]{stew:1973} in terms of a less stringent condition and yet a sharper bound.

Given $G\in\bbC^{m\times n}$,
let
\begin{subequations}\label{eq:G:SVD:apx}
\begin{equation}\label{eq:UV}
U\equiv\kbordermatrix{ &\sss r &\sss m-r \\
                  & U_1 & U_2}\in\bbC^{m\times m}, \quad
V\equiv\kbordermatrix{ &\sss r &\sss n-r \\
                  & V_1 & V_2}\in\bbC^{n\times n}
\end{equation}
be two unitary matrices such that $G$ admits the decomposition
\begin{equation}\label{eq:G:SVD-1:apx}
U^{\HH}GV\equiv\begin{bmatrix}
        U_1^{\HH} \\
        U_2^{\HH}
      \end{bmatrix}G[V_1, V_2]
=\kbordermatrix{ &\sss r &\sss n-r \\
              \sss r &G_1 & 0 \\
              \sss m-r & 0 &G_2},
\end{equation}
\end{subequations}
where $1\le r<\min\{m,n\}$.
For example, this can be the SVD of $G$, for which
$G_1$ is diagonal with nonnegative diagonal entries as some of the singular values of $G$ and
$G_2$ is leading diagonal by which we mean that only its entries along the main diagonal line
starting at the top-left corner may be nonzero and nonnegative and they are some of the singular values of $G$, too.
By convention, $G$ has $\min\{m,n\}$ singular values $\sigma_i(G)$ arranged in decreasing order:
\begin{equation}\label{eq:sigma(G)}
\sigma_1(G)\ge\sigma_2(G)\ge\cdots\ge\sigma_{\min\{m,n\}}(G).
\end{equation}
Denote
the singular value set and its extended set of $G$ by
\begin{equation}\label{eq:svs(G)}
\sv(G)=\{\sigma_i(G)\}_{i=1}^{\min\{m,n\}}, \quad
\sv_{\ext}(G)=\sv(G)\cup\{\mbox{$|m-n|$ copies of $0$s}\},
\end{equation}
where the union is meant to be the multiset union.



Consider now that $G$ is perturbed to $\wtd G=G+E\in\bbC^{m\times n}$, and partition
\begin{equation}\label{eq:tG:apx-1}
U^{\HH}\wtd GV\equiv \begin{bmatrix}
        U_1^{\HH} \\
        U_2^{\HH}
      \end{bmatrix}(G+E)[V_1, V_2]
      =\kbordermatrix{ &\sss r &\sss n-r \\
              \sss r & G_1+E_{11} &  E_{12} \\
              \sss m-r &  E_{21} & G_2+E_{22}}.
\end{equation}
Naturally, one would ask whether $\wtd G$ admits a decomposition that is ``close'' to the one for $G$ in \eqref{eq:G:SVD:apx}.
Stewart \cite[Theorem~6.4]{stew:1973} provided an answer to that by
seeking orthogonal matrices  \cite[p.760]{stew:1973}
\begin{subequations}\label{eq:chkUV:apx}
\begin{align}
\check U&\equiv\kbordermatrix{ &\sss r &\sss m-r \\
                  & \check U_1 & \check U_2}
        =[U_1,U_2]\begin{bmatrix}
                    I_r & -\Gamma^{\HH} \\
                    \Gamma & I_{m-r}
                  \end{bmatrix}
                  \begin{bmatrix}
                    (I+\Gamma^{\HH}\Gamma)^{-1/2} & 0 \\
                    0 & (I+\Gamma\Gamma^{\HH})^{-1/2}
                  \end{bmatrix}, \label{eq:chkUV-1:apx}\\
\check V&\equiv\kbordermatrix{ &\sss r &\sss n-r \\
                  & \check V_1 & \check V_2}
        =[V_1,V_2]\begin{bmatrix}
                    I_r & -\Omega^{\HH} \\
                    \Omega & I_{n-r}
                  \end{bmatrix}
                  \begin{bmatrix}
                    (I+\Omega^{\HH}\Omega)^{-1/2} & 0 \\
                    0 & (I+\Omega\Omega^{\HH})^{-1/2}
                  \end{bmatrix} \label{eq:chkUV-2:apx}
\end{align}
\end{subequations}
such that
\begin{equation}\label{eq:SVD4tG-almost:apx}
\check U^{\HH}\wtd G\check V
\equiv \begin{bmatrix}
        \check U_1^{\HH} \\
        \check U_2^{\HH}
      \end{bmatrix}(G+E)[\check V_1, \check V_2]
  =\kbordermatrix{ &\sss r &\sss n-r \\
              \sss r & \check G_1 & 0 \\
              \sss m-r &  0 & \check G_2}.
\end{equation}

\Cref{thm:stew:1973SVD} below is \cite[Theorem~6.4]{stew:1973} in our notations here.

\begin{theorem}[{\cite[Theorem~6.4]{stew:1973}}]\label{thm:stew:1973SVD}
Given $G,\,\wtd G\in\bbC^{m\times n}$, let $G$ be decomposed as in \eqref{eq:G:SVD:apx}, and partition
$U^{\HH}\wtd GV$ according to \eqref{eq:tG:apx-1}. Let
\begin{subequations}\label{eq:stew:1973SVD-cond}
\begin{align}
\hat\varepsilon&=\sqrt{\|E_{12}\|_{\F}^2+\|E_{21}\|_{\F}^2}, \label{eq:stew:1973SVD-cond-1} \\
\delta&=\min_{\mu\in\sv(G_1),\,\nu\in\sv_{\ext}(G_2)}\,|\mu-\nu|, \label{eq:stew:1973SVD-cond-2} \\
\munderbar\delta&=\delta-\|E_{11}\|_2-\|E_{22}\|_2, \label{eq:stew:1973SVD-cond-3}
\end{align}
\end{subequations}
where $\|\cdot\|_2$ and $\|\cdot\|_{\F}$ denote the  spectral and Frobenius norms, respectively.
If
\begin{equation}\label{eq:stew:cond}
\munderbar\delta>0
\quad\mbox{and}\quad
\frac {\hat\varepsilon}{\munderbar\delta}<\frac 12,
\end{equation}
then there exist
$\Omega\in\bbC^{(n-r)\times r}$ and $\Gamma\in\bbC^{(m-r)\times r}$ satisfying
\begin{equation}\label{eq:stew:1973SVD-conc}
\sqrt{\|\Gamma\|_{\F}^2+\|\Omega\|_{\F}^2}
      \le\frac {1+\sqrt{1-4(\hat\varepsilon/\munderbar\delta)^2}}{1-2(\hat\varepsilon/\munderbar\delta)^2+\sqrt{1-4(\hat\varepsilon/\munderbar\delta)^2}}
         \frac {\hat\varepsilon}{\munderbar\delta}<2\,\frac {\hat\varepsilon}{\munderbar\delta}
\end{equation}
such that \eqref{eq:SVD4tG-almost:apx} with \eqref{eq:chkUV:apx} holds.
\end{theorem}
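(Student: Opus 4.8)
The plan is to convert the block-diagonalization requirement in \eqref{eq:SVD4tG-almost:apx} with \eqref{eq:chkUV:apx} into a pair of coupled quadratic matrix equations for $\Gamma,\Omega$, and then solve that pair by a contraction argument built on the solution operator of the associated \emph{linear} coupled Sylvester system. Write $A:=U^{\HH}\wtd GV$ with the block partition \eqref{eq:tG:apx-1}, and set
\[
P=\begin{bmatrix}I_r & -\Gamma^{\HH}\\ \Gamma & I_{m-r}\end{bmatrix},\qquad
Q=\begin{bmatrix}I_r & -\Omega^{\HH}\\ \Omega & I_{n-r}\end{bmatrix},
\]
so that $\check U=UPD_1$ and $\check V=VQD_2$ with $D_1,D_2$ the block-diagonal normalizers appearing in \eqref{eq:chkUV:apx}. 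Because $D_1^{\HH}P^{\HH}PD_1=I$ and likewise for $Q,D_2$, the matrices $\check U,\check V$ are unitary for \emph{every} choice of $\Gamma,\Omega$, and since $D_1,D_2$ are block diagonal, \eqref{eq:SVD4tG-almost:apx} holds if and only if the $(2,1)$- and $(1,2)$-blocks of $P^{\HH}AQ$ vanish. A direct expansion turns this into
\begin{align*}
\Gamma G_1-G_2\Omega&=E_{21}-\Gamma E_{12}\Omega-\Gamma E_{11}+E_{22}\Omega,\\
\Omega G_1^{\HH}-G_2^{\HH}\Gamma&=E_{12}^{\HH}-\Omega E_{21}^{\HH}\Gamma-\Omega E_{11}^{\HH}+E_{22}^{\HH}\Gamma ,
\end{align*}
the second line being the conjugate transpose of the $(1,2)$-block equation. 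It thus suffices to exhibit a solution $(\Gamma,\Omega)$ of this system satisfying the bound in \eqref{eq:stew:1973SVD-conc}.

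Second, I would study the linear operator $\mathcal T$ that sends $(S,T)$ to the solution $(\Gamma,\Omega)$ of $\Gamma G_1-G_2\Omega=S$, $\Omega G_1^{\HH}-G_2^{\HH}\Gamma=T$, on the space of pairs equipped with the norm $(\Gamma,\Omega)\mapsto\sqrt{\|\Gamma\|_{\F}^2+\|\Omega\|_{\F}^2}$. Inserting the SVDs of $G_1$ and $G_2$ amounts to pre- and post-multiplying $\Gamma,\Omega,S,T$ by unitary matrices, which preserves this norm and the shape of the system while replacing $G_1$ by a diagonal matrix carrying $\sv(G_1)$ and $G_2$ by its leading-diagonal singular value matrix. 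The system then splits, entry by entry, into $2\times2$ systems with coefficient $\left[\begin{smallmatrix}\mu & -\nu\\ -\nu & \mu\end{smallmatrix}\right]$, $\mu\in\sv(G_1)$, $\nu\in\sv(G_2)$, together with $1\times1$ systems $[\mu]$ arising from the surplus rows or columns of the rectangular block --- and this is exactly where the \emph{extended} set $\sv_{\ext}(G_2)$ of \eqref{eq:svs(G)} enters, the extra singular value being $0$. Since $\mu,\nu\ge0$, the smallest singular value over all these small systems is $\min_{\mu\in\sv(G_1),\,\nu\in\sv_{\ext}(G_2)}|\mu-\nu|=\delta$; hence, provided $\delta>0$ (which follows from $\munderbar\delta>0$), $\mathcal T$ is well defined and $\|\mathcal T\|\le1/\delta$ in the above norm. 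This sharp operator-norm estimate, with its careful accounting of the rectangular case, is the crux of the argument and is the pertinent special case of the coupled-Sylvester bounds established earlier in the paper.

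Finally, I would solve the coupled quadratic system via the fixed-point map
\[
\Phi(\Gamma,\Omega):=\mathcal T\bigl(E_{21}-\Gamma E_{12}\Omega-\Gamma E_{11}+E_{22}\Omega,\ \ E_{12}^{\HH}-\Omega E_{21}^{\HH}\Gamma-\Omega E_{11}^{\HH}+E_{22}^{\HH}\Gamma\bigr)
\]
on the ball $\mathcal B_{\rho}=\{(\Gamma,\Omega):\eta:=\sqrt{\|\Gamma\|_{\F}^2+\|\Omega\|_{\F}^2}\le\rho\}$. Using $\|\Gamma E_{12}\Omega\|_{\F}\le\|\Gamma\|_2\,\|E_{12}\|_{\F}\,\|\Omega\|_2$ and its twin, the quadratic part of the right-hand side has combined norm at most $\eta^2\hat\varepsilon$; the elementary inequality $2ab\le a^2+b^2$ bounds the combined norm of the $E_{11},E_{22}$ part by $\eta(\|E_{11}\|_2+\|E_{22}\|_2)$; together with $\|\mathcal T\|\le1/\delta$ this gives $\|\Phi(\Gamma,\Omega)\|\le\delta^{-1}\bigl[\hat\varepsilon(1+\eta^2)+\eta(\|E_{11}\|_2+\|E_{22}\|_2)\bigr]$. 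The self-map property of $\Phi$ on $\mathcal B_{\rho}$ then collapses, after using $\munderbar\delta=\delta-\|E_{11}\|_2-\|E_{22}\|_2$, to the scalar inequality $t\rho^2-\rho+t\le0$ with $t:=\hat\varepsilon/\munderbar\delta$, which is solvable precisely when $t<1/2$ --- the second condition in \eqref{eq:stew:cond} --- its smallest root being
\[
\rho_-=\frac{1-\sqrt{1-4t^2}}{2t}=\frac{2t}{1+\sqrt{1-4t^2}} .
\]
A parallel estimate of $\Phi(\Gamma,\Omega)-\Phi(\Gamma',\Omega')$ shows that $\Phi$ is a contraction on $\mathcal B_{\rho_-}$ (its Lipschitz constant is at most $(2\rho_-\hat\varepsilon+\|E_{11}\|_2+\|E_{22}\|_2)/\delta$, which is $<1$ exactly when $t<1/2$), so the Banach fixed-point theorem delivers a solution $(\Gamma,\Omega)$ with $\eta\le\rho_-$. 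Since $1-2t^2=\tfrac12\bigl(1+(1-4t^2)\bigr)$, one has $\rho_-=\dfrac{1+\sqrt{1-4t^2}}{1-2t^2+\sqrt{1-4t^2}}\,t<2t$, which is precisely the bound claimed in \eqref{eq:stew:1973SVD-conc}. The delicate points are the $1/\delta$ operator bound with its rectangular bookkeeping and arranging the matrix-norm estimates so that the self-map condition reduces to the clean quadratic $t\rho^2-\rho+t\le0$.
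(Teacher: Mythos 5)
Your proof is correct, and it arrives at exactly the bound in \eqref{eq:stew:1973SVD-conc}, including the algebraic rewriting of the lesser quadratic root into the form $\frac{1+\sqrt{1-4t^2}}{1-2t^2+\sqrt{1-4t^2}}t$. The reduction of the block-diagonalization requirement to the coupled quadratic system, the SVD decoupling that gives $\|\mathcal T\|\le 1/\delta$ with the correct $\sv_{\ext}(G_2)$ bookkeeping for the rectangular case, and the self-map inequality $t\rho^2-\rho+t\le 0$ all check out (your estimate $\eta(\|E_{11}\|_2+\|E_{22}\|_2)$ for the $E_{11},E_{22}$ contribution is exactly what is needed, and the contraction constant $(2\rho_-\hat\varepsilon+\|E_{11}\|_2+\|E_{22}\|_2)/\delta$ is indeed $<1$ when $t<1/2$).

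The route, however, differs from the one this paper takes for its variant \Cref{thm:SVD-almost}, which is the template by which it would prove \Cref{thm:stew:1973SVD}. The paper puts the perturbed diagonal blocks into the \emph{linear} operator: it sets $\bT(\Gamma,\Omega)=\bigl(\Gamma(G_1+E_{11})-(G_2+E_{22})\Omega,\ \Omega(G_1+E_{11})^{\HH}-(G_2+E_{22})^{\HH}\Gamma\bigr)$, proves $\|\bT^{-1}\|^{-1}\ge\munderbar\delta$ by combining \Cref{lm:BdCpldSylv}(a) with Mirsky's theorem (\Cref{lm:veri-stew-bT}), leaves only the bilinear terms in $\bphi$, and then cites Stewart's abstract contraction lemma (\Cref{lm:stew-lm}) as a black box. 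You instead keep the linear operator with coefficients $G_1,G_2$ so that $\|\mathcal T\|\le 1/\delta$ follows directly from the SVD decoupling (essentially \Cref{lm:BdCpldSylv}(a) with $A=G_1$, $B=G_2$), fold $E_{11},E_{22}$ into the right-hand side of the fixed-point iteration as linear terms, and run a Banach fixed-point argument from scratch. This trades one tool for another: you avoid Mirsky's theorem and the abstract \Cref{lm:stew-lm}, at the price of the slightly more delicate bookkeeping needed to verify that the linear $E_{11},E_{22}$ contribution is at most $\eta(\|E_{11}\|_2+\|E_{22}\|_2)$ and that the self-map inequality still collapses to $t\rho^2-\rho+t\le0$ after replacing $\delta$ by $\munderbar\delta$. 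The paper's approach is more modular and scales directly to the unitarily invariant norm generalization (its actual goal), whereas your argument is more self-contained and elementary for the specific Frobenius-norm statement.
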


A number of results can be deduced from this informative theorem, e.g.,
bounds on $\|U-\check U\|_{\F}$ and $\|V-\check V\|_{\F}$, the explicit expressions of
$\check G_1$ and $\check G_2$ in terms of $\Omega$ and $\Gamma$, $G_1$, $G_2$, and $E_{ij}$, and also the singular values of $\wtd G$
is the multiset union of the singular values of $\check G_1$ and $\check G_2$.

Although the spectral norm $\|\cdot\|_2$ is used in defining $\munderbar\delta$,
the Frobenius norm $\|\cdot\|_{\F}$ is in full display in defining $\hat\varepsilon$ and in bounding $\Omega$ and $\Gamma$.
A straightforward version of it, using only the spectral norm can be easily derived in light of the equivalency inequalities
$$
\|B\|_2\le\|B\|_{\F}\le\sqrt{\rank(B)}\,\|B\|_2\le\sqrt{\min\{s,t\}}\,\|B\|_2
\quad\mbox{for $B\in\bbC^{s\times t}$}.
$$
For example, we can conclude that
{\em if $\munderbar\delta>0$ and
\begin{equation}\label{eq:stew:1973SVD-cond'}
\tilde\varepsilon:=\sqrt{\min\{m-r, n-r,r\}}\frac {\sqrt{\|E_{12}\|_2^2+\|E_{21}\|_2^2}}{\munderbar\delta}<\frac 12,
\end{equation}
then there exist
$\Omega\in\bbC^{(n-r)\times r}$ and $\Gamma\in\bbC^{(m-r)\times r}$ satisfying
\begin{equation}\label{eq:stew:1973SVD-conc'}
\sqrt{\|\Gamma\|_{\F}^2+\|\Omega\|_{\F}^2}
      \le\frac {1+\sqrt{1-4(\tilde\varepsilon/\munderbar\delta)^2}}{1-2(\tilde\varepsilon/\munderbar\delta)^2+\sqrt{1-4(\tilde\varepsilon/\munderbar\delta)^2}}
         \frac {\tilde\varepsilon}{\munderbar\delta}<2\,\frac {\tilde\varepsilon}{\munderbar\delta}
\end{equation}
such that \eqref{eq:SVD4tG-almost:apx} with \eqref{eq:chkUV:apx} holds.}
There are two clear drawbacks of such a straightforward version:
1) a much stronger condition in \eqref{eq:stew:1973SVD-cond'} than something like
\begin{equation}\label{eq:stew:1973SVD-cond''}
\frac {\sqrt{\|E_{12}\|_2^2+\|E_{21}\|_2^2}}{\munderbar\delta}<\frac 12
\end{equation}
as one might possibly expect, and 2) a much weaker
conclusion in \eqref{eq:stew:1973SVD-conc'} than
something like
\begin{equation}\label{eq:stew:1973SVD-conc''}
\sqrt{\|\Gamma\|_2^2+\|\Omega\|_2^2}\le
\sqrt{\|\Gamma\|_{\F}^2+\|\Omega\|_{\F}^2}< 2\,\frac {\sqrt{\|E_{12}\|_2^2+\|E_{21}\|_2^2}}{\munderbar\delta}
\end{equation}
also as one might possibly expect. The appearances of $m$ and $n$ in \eqref{eq:stew:1973SVD-cond'}
are particularly unsatisfactory for large (huge) $m$ and usually modest $r$.

Our goal in this paper is to create a version of \Cref{thm:stew:1973SVD} that  uses
the spectral norm and any unitarily invariant norm. Our eventual version, \Cref{thm:SVD-almost},
when restricted exclusively to the spectral norm,  does not  exactly coincide with the possible expectations in
\eqref{eq:stew:1973SVD-cond''} and \eqref{eq:stew:1973SVD-conc''}, but is very much close to it, namely
\eqref{eq:stew:1973SVD-cond''} and \eqref{eq:stew:1973SVD-conc''} with
$\sqrt{\|E_{12}\|_2^2+\|E_{21}\|_2^2}$ and $\sqrt{\|\Gamma\|_2^2+\|\Omega\|_2^2}$ replaced by
$\max\{\|E_{12}\|_2,\|E_{21}\|_2\}$ and $\max\{\|\Gamma\|_2,\|\Omega\|_2\}$, respectively.
In particular, $m$ and $n$ disappear altogether.

The rest of this paper is organized as follows. \Cref{sec:prelim} states two preliminary results for later use in our proofs.
Our main result, a variant of Stewart's, is stated and proved in \Cref{sec:main}.
In \Cref{sec:disscuss}, we compare our main result realized for the spectral norm only and for both the spectral
and Frobenius norm with Stewart's result in \Cref{thm:stew:1973SVD} and its potential consequences
in \eqref{eq:stew:1973SVD-cond'} and \eqref{eq:stew:1973SVD-conc'}.
Finally, we draw our conclusions in \Cref{sec:concl}.

\noindent
{\bf Notation}.
$\bbC^{n\times m}$ is the set
of all $n\times m$ complex matrices, $\bbC^n=\bbC^{n\times 1}$,
and $\bbC=\bbC^1$.
$I_n$ (or simply $I$ if its dimension is
clear from the context) is the $n\times n$ identity matrix.
The superscript $X^{\HH}$ is the complex conjugate transpose of a matrix or vector $X$.
We shall also adopt MATLAB-like convention to
access the entries of vectors and matrices.
Let $i:j$ be  the set of integers from $i$ to $j$ inclusive.
$X_{(k:\ell,i:j)}$, $X_{(k:\ell,:)}$, and $X_{(:,i:j)}$ are submatrices of $X$,
consisting of intersections of row $k$ to row $\ell$ and  column $i$ to column $j$,
row $k$ to row $\ell$, and column $i$ to column $j$, respectively.
${\cal R}(B)$ is the column space of $B$, i.e., the subspace spanned by the columns of $B$.
Finally $\eig(A)$ is the spectrum of a square matrix $A$.

We will continue to adopt the notation introduced so far in this section such as $\|\cdot\|_2$ and $\|\cdot\|_{\F}$.
In particular, $G\in\bbC^{m\times n}$ has $\min\{m,n\}$ singular values denoted by $\sigma_i(G)$ in decreasing order as in
\eqref{eq:sigma(G)}, and accordingly two singular value sets $\sv(G)$ and $\sv_{\ext}(G)$ in \eqref{eq:svs(G)}, and
$\sigma_{\max}(G):=\sigma_1(G)$ and $\sigma_{\min}(G):=\sigma_{\min\{m,n\}}(G)$.

\section{Preliminaries}\label{sec:prelim}
In this section, we will state four lemmas that we will need later.
The first lemma, \Cref{lm:stew-lm}, is due to Stewart \cite{stew:1971,stew:1973}, and
the second one, \Cref{lm:BdSylv}, summarizes known bounds on the solution to the Sylvester equation with Hermitian coefficient matrices
of Davis and Kahan~\cite{daka:1970} and of Bhatia, Davis, and McIntosh~\cite{bhdm:1983}.
The third one, \Cref{lm:BdCpldSylv}, establishes
bounds on the solution pair to a set of the coupled Sylvester equations and the results are new, except the one for
the Frobenius norm. Finally, the fourth lemma, \Cref{lm:SVr}, is likely known.


\begin{lemma}[{\cite[Theorem 3.5]{stew:1971}, \cite[Theorem 3.1]{stew:1973}}]\label{lm:stew-lm}
Let $\bT$ be a bounded linear operator on the Banach space $(\scrB,\|\cdot\|)$ that
has a bounded inverse. Let $\bphi$ be a continuous function on $(\scrB,\|\cdot\|)$
that satisfies, for $\bx,\,\by\in\scrB$,
\begin{align*}
  \mbox{\rm (i)}  &\quad \|\bphi(\bx)\|\le\hat\varepsilon\|\bx\|^2, \\
  \mbox{\rm (ii)} &\quad \|\bphi(\bx)-\bphi(\by)\|\le 2\hat\varepsilon\max\{\|\bx\|,\|\by\|\}\|\bx-\by\|,
\end{align*}
for some $\hat\varepsilon\ge 0$. Let
$
0<\hat\delta\le\|\bT^{-1}\|^{-1}.
$
Given $\bg\in\scrB$, if
$$
\kappa_2:=(\hat\varepsilon/\hat\delta^2)\|\bg\|<1/4,
$$
then equation
$
\bT\bx=\bg-\bphi(\bx)
$
has a solution $\bx\in\scrB$ that satisfies
$$
\|\bx\|\le 
   \frac {1+\sqrt{1-4\kappa_2}}{1-2\kappa_2+\sqrt{1-4\kappa_2}}\frac {\|\bg\|}{\hat\delta}
   < 2\frac {\|\bg\|}{\hat\delta}.
$$
\end{lemma}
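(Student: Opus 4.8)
To prove \Cref{lm:stew-lm} I would recast the equation as a fixed-point problem and apply the contraction mapping principle on a suitably chosen closed ball. Since $\bT$ has a bounded inverse and $0<\hat\delta\le\|\bT^{-1}\|^{-1}$, we have $\|\bT^{-1}\|\le\hat\delta^{-1}$, and $\bx\in\scrB$ solves $\bT\bx=\bg-\bphi(\bx)$ if and only if it is a fixed point of
\[
\Psi(\bx):=\bT^{-1}\bigl(\bg-\bphi(\bx)\bigr),
\]
which is well defined and continuous on $\scrB$ because $\bphi$ is. We may assume $\hat\varepsilon>0$, since otherwise $\bx=\bT^{-1}\bg$ is already a solution with $\|\bx\|\le\|\bg\|/\hat\delta$, matching the claim.

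The first step is to find a radius $\rho$ for which $\Psi$ maps the closed ball $\mathcal{B}_{\rho}:=\{\bx\in\scrB:\|\bx\|\le\rho\}$ into itself. For $\|\bx\|\le\rho$, assumption~(i) gives
\[
\|\Psi(\bx)\|\le\|\bT^{-1}\|\bigl(\|\bg\|+\|\bphi(\bx)\|\bigr)\le\frac{\|\bg\|+\hat\varepsilon\rho^2}{\hat\delta},
\]
so $\Psi(\mathcal{B}_{\rho})\subseteq\mathcal{B}_{\rho}$ whenever $\hat\varepsilon\rho^2-\hat\delta\rho+\|\bg\|\le0$. Under the hypothesis $\kappa_2=(\hat\varepsilon/\hat\delta^2)\|\bg\|<1/4$ the discriminant of this quadratic in $\rho$ equals $\hat\delta^2-4\hat\varepsilon\|\bg\|=\hat\delta^2(1-4\kappa_2)>0$, so it has the two positive roots $\rho_\pm=\bigl(\hat\delta\pm\hat\delta\sqrt{1-4\kappa_2}\bigr)/(2\hat\varepsilon)$ and is $\le0$ precisely on $[\rho_-,\rho_+]$. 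I would take $\rho:=\rho_-$, the smaller root; rationalizing its numerator and using $(1+\sqrt{1-4\kappa_2})^2=2\bigl(1-2\kappa_2+\sqrt{1-4\kappa_2}\bigr)$ gives
\[
\rho_-=\frac{2\|\bg\|}{\hat\delta\bigl(1+\sqrt{1-4\kappa_2}\bigr)}=\frac{1+\sqrt{1-4\kappa_2}}{1-2\kappa_2+\sqrt{1-4\kappa_2}}\,\frac{\|\bg\|}{\hat\delta},
\]
which is strictly less than $2\|\bg\|/\hat\delta$ because $\sqrt{1-4\kappa_2}>0$.

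The second step is to verify that $\Psi$ is a contraction on $\mathcal{B}_{\rho_-}$. For $\bx,\by\in\mathcal{B}_{\rho_-}$, assumption~(ii) yields
\[
\|\Psi(\bx)-\Psi(\by)\|\le\|\bT^{-1}\|\,\|\bphi(\bx)-\bphi(\by)\|\le\frac{2\hat\varepsilon\rho_-}{\hat\delta}\,\|\bx-\by\|,
\]
and inserting the expression for $\rho_-$ gives the Lipschitz constant $L=4\kappa_2/\bigl(1+\sqrt{1-4\kappa_2}\bigr)$; writing $t=\sqrt{1-4\kappa_2}\in(0,1]$ we have $4\kappa_2=(1-t)(1+t)<1+t$, hence $L<1$. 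Since $\mathcal{B}_{\rho_-}$ is a closed subset of the Banach space $(\scrB,\|\cdot\|)$ and therefore complete, the Banach fixed-point theorem furnishes a (unique) fixed point $\bx\in\mathcal{B}_{\rho_-}$ of $\Psi$, which solves $\bT\bx=\bg-\bphi(\bx)$ and obeys $\|\bx\|\le\rho_-$ --- exactly the asserted bound.

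The only genuine obstacle is the elementary but fiddly bookkeeping: choosing the correct root $\rho_-$, checking that it collapses to the stated closed form via the identity above, and confirming both $L<1$ and $\rho_-<2\|\bg\|/\hat\delta$ (which is where the strict inequality $\kappa_2<1/4$ is used). If one prefers not to cite the Banach theorem, the same conclusion follows by showing directly that the successive approximations $\bx_0=0$, $\bx_{k+1}=\Psi(\bx_k)$ remain in $\mathcal{B}_{\rho_-}$ (by the self-map property) and form a Cauchy sequence contracting with ratio $L$, whose limit is the sought solution.
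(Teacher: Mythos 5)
Your proof is correct, and since the paper does not reproduce a proof of this lemma but simply cites Stewart (1971, Thm.~3.5; 1973, Thm.~3.1), there is nothing to diverge from; your contraction-mapping argument on the ball of radius $\rho_-$ is in essence the same successive-approximation scheme Stewart uses. The algebra checks out: the quadratic self-map condition $\hat\varepsilon\rho^2-\hat\delta\rho+\|\bg\|\le 0$, the rationalization of $\rho_-$ to the stated closed form via $(1+\sqrt{1-4\kappa_2})^2=2(1-2\kappa_2+\sqrt{1-4\kappa_2})$, and the Lipschitz constant $L=1-\sqrt{1-4\kappa_2}<1$ are all right, and you correctly dispose of the degenerate case $\hat\varepsilon=0$.
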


We need the notation of unitarily invariant norm to go forward. A matrix norm $\|\cdot\|_{\UI}$ is called a {\em unitarily invariant norm\/} on $\bbC^{m\times n}$ if it is a matrix norm
and has the following two additional properties \cite{bhat:1996,stsu:1990}:
\begin{enumerate}
  \item $\| U^{\HH}BV\|_{\UI}=\| B\|_{\UI}$ for all unitary matrices $U\in\bbC^{m\times m}$ and $V\in\bbC^{n\times n}$
        and $B\in\bbC^{m\times n}$;
  \item $\| B\|_{\UI}=\|B\|_2$, the spectral norm of $B$, if $\rank(B)=1$.
\end{enumerate}
Two commonly used unitarily invariant norms are
$$
\begin{tabular}{rl}
the spectral norm: & $\|B\|_2=\max_j\sigma_j$,  \\
the Frobenius norm: & $\|B\|_{\F}=\sqrt{\sum_j\sigma_j^2}$\,,
\end{tabular}
$$
where $\sigma_1,\sigma_2,\ldots,\sigma_{\min\{m,n\}}$ are the singular values of $B$.
In what follows, $\|\cdot\|_{\UI}$ denotes a general unitarily invariant norm.


In this article, for convenience, any $\|\cdot\|_{\UI}$
we use is generic to matrix sizes in the sense that it applies to matrices of all sizes.
Examples include the matrix spectral norm $\|\cdot\|_2$, the Frobenius norm $\|\cdot\|_{\F}$, and the trace norm. One important property
of unitarily invariant norms is
$$
\| XYZ\|_{\UI}\le\|X\|_2\cdot\| Y\|_{\UI}\cdot\|Z\|_2
$$
for any matrices $X$, $Y$, and $Z$ of compatible sizes.

The next lemma summarizes known bounds on the solution of the Sylvester equation $AX-XB=S$. These bounds have played important roles
in eigenspace variations as demonstrated by Davis and Kahan~\cite[1970]{daka:1970}, and Bhatia, Davis, and McIntosh~\cite[1983]{bhdm:1983}.
For a brief review, see \cite{li:1999d}.

\begin{lemma}\label{lm:BdSylv}
Consider matrix equation $XA-BX=S$, where  $A\in\bbR^{r\times r}$ and $B\in\bbR^{s\times s}$ are Hermitian, and $S\in\bbC^{s\times r}$.
If $\eig(A)\cap\eig(B)=\emptyset$, then the equation has a unique solution $X\in\bbC^{s\times r}$. Furthermore, the following statements
hold.
\begin{enumerate}[{\rm (a)}]
  \item {\rm \cite{daka:1970}} we have
        \begin{equation}\label{ineq:DK-F}
        \|X\|_{\F}\le\|S\|_{\F}/\delta,\quad
        \delta:=\min_{\mu\in\eig(A),\,\nu\in\eig(B)}\,|\mu-\nu|;
        \end{equation}
  \item {\rm \cite{daka:1970}} if there exist $\alpha<\beta$ and $\delta>0$ such that
        \begin{equation}\label{eq:stric-sepd}
        \begin{array}{rl}
        \mbox{either}\quad&\mbox{$\eig(A)\subset [\alpha,\beta]$ and $\eig(B)\subset(-\infty,\alpha-\delta]\cup [\beta+\delta,\infty)$}, \\
        \mbox{or}\quad&\mbox{$\eig(A)\subset (-\infty,\alpha-\delta]\cup [\beta+\delta,\infty)$ and $\eig(B)\subset[\alpha,\beta]$},
        \end{array}
        \end{equation}
        then for any unitarily invariant norm $\|\cdot\|_{\UI}$,
        \begin{equation}\label{ineq:DK-UI}
        \|X\|_{\UI}\le\|S\|_{\UI}/\delta;
        \end{equation}
  \item {\rm \cite{bhdm:1983,bhro:1997}} we have
        \begin{equation}\label{ineq:BDM-UI}
        \|X\|_{\UI}\le(\pi/2)\|S\|_{\UI}/\delta,
        \end{equation}
        where $\delta$ is as in \eqref{ineq:DK-F}.
\end{enumerate}
\end{lemma}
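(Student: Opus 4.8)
The plan is to settle the existence/uniqueness statement and part~(a) by diagonalizing both coefficient matrices, to get part~(b) from an elementary Neumann-series estimate after recentering the spectra, and to derive part~(c) from the Bhatia--Davis--McIntosh integral representation of the inverse Sylvester operator; the last of these carries essentially all the difficulty.

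For the common setup, write the eigendecompositions $A=V\Lambda_AV^{\HH}$, $B=W\Lambda_BW^{\HH}$ with $V\in\bbC^{r\times r}$, $W\in\bbC^{s\times s}$ unitary and $\Lambda_A=\operatorname{diag}(\mu_1,\dots,\mu_r)$, $\Lambda_B=\operatorname{diag}(\nu_1,\dots,\nu_s)$. Setting $\mathcal T(X):=XA-BX$ and letting $v_k$, $w_j$ be the columns of $V$, $W$, one has $\mathcal T(w_jv_k^{\HH})=(\mu_k-\nu_j)\,w_jv_k^{\HH}$, so the $rs$ rank-one matrices $w_jv_k^{\HH}$ form an eigenbasis of $\mathcal T$, orthonormal in the Frobenius inner product, with eigenvalues $\mu_k-\nu_j$; hence $\mathcal T$ is invertible---equivalently $XA-BX=S$ has a unique solution---precisely when $\eig(A)\cap\eig(B)=\emptyset$. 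For~(a), substitute $\widetilde X=W^{\HH}XV$, $\widetilde S=W^{\HH}SV$ to reduce the equation to $\widetilde X\Lambda_A-\Lambda_B\widetilde X=\widetilde S$, i.e.\ $(\mu_k-\nu_j)\,\widetilde x_{jk}=\widetilde s_{jk}$ for all $j,k$; since $|\mu_k-\nu_j|\ge\delta$ we get $|\widetilde x_{jk}|\le|\widetilde s_{jk}|/\delta$ entrywise, whence $\|\widetilde X\|_{\F}\le\|\widetilde S\|_{\F}/\delta$, and unitary invariance of $\|\cdot\|_{\F}$ gives \eqref{ineq:DK-F}. This entrywise argument is exactly what fails for a general $\|\cdot\|_{\UI}$, which is why~(b) needs the stronger hypothesis \eqref{eq:stric-sepd} and why~(c) incurs the constant $\pi/2$.

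For~(b), use that $XA-BX=S$ is invariant under the simultaneous shift $A\mapsto A-cI$, $B\mapsto B-cI$. Take $c=\tfrac12(\alpha+\beta)$ and $\eta=\tfrac12(\beta-\alpha)>0$; in the first alternative of \eqref{eq:stric-sepd} the shift makes $\|A\|_2\le\eta$, while $\eig(B)\subseteq(-\infty,-\eta-\delta]\cup[\eta+\delta,\infty)$ keeps $B$ invertible with $\|B^{-1}\|_2\le(\eta+\delta)^{-1}$. Rewriting the equation as $X=B^{-1}XA-B^{-1}S$ and iterating yields the series $X=-\sum_{k\ge0}B^{-(k+1)}SA^{k}$, convergent because $\|B^{-1}\|_2\|A\|_2\le\eta/(\eta+\delta)<1$. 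Applying $\|XYZ\|_{\UI}\le\|X\|_2\|Y\|_{\UI}\|Z\|_2$ term by term and summing the geometric series gives
\[
\|X\|_{\UI}\le\frac{\|B^{-1}\|_2}{1-\|B^{-1}\|_2\|A\|_2}\|S\|_{\UI}\le\frac{(\eta+\delta)^{-1}}{1-\eta/(\eta+\delta)}\|S\|_{\UI}=\frac{\|S\|_{\UI}}{\delta},
\]
the middle step using that the right-hand bound is monotone in both $\|B^{-1}\|_2$ and $\|A\|_2$; this is \eqref{ineq:DK-UI}. The second alternative of \eqref{eq:stric-sepd} is handled symmetrically, writing instead $X=BXA^{-1}+SA^{-1}$ with $A$ invertible and $\|A^{-1}\|_2\le(\eta+\delta)^{-1}$.

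Part~(c) is the substantive one and, I expect, the main obstacle. The idea is to exhibit $\mathcal T^{-1}$ as an average of norm-preserving conjugations. Following Bhatia--Davis--McIntosh \cite{bhdm:1983} (see also \cite{bhro:1997}), there is a finite complex Borel measure $\mu$ on $\bbR$ with total variation $\|\mu\|=\pi/2$ such that $\int_{\bbR}e^{\mathrm{i}xt}\,d\mu(t)=1/x$ for every $x$ with $|x|\ge1$; rescaling by $\delta$ produces a measure $\rho$ with $\|\rho\|=\pi/(2\delta)$ and $\int_{\bbR}e^{\mathrm{i}xt}\,d\rho(t)=1/x$ for $|x|\ge\delta$. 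Since $A$, $B$ are Hermitian, $e^{\mathrm{i}tA}$ and $e^{-\mathrm{i}tB}$ are unitary, and evaluating on the eigenbasis $w_jv_k^{\HH}$ (each gap satisfies $|\mu_k-\nu_j|\ge\delta$) shows
\[
X=\mathcal T^{-1}(S)=\int_{\bbR}e^{-\mathrm{i}tB}\,S\,e^{\mathrm{i}tA}\,d\rho(t).
\]
Passing $\|\cdot\|_{\UI}$ inside the integral and using unitary invariance gives $\|X\|_{\UI}\le\|\rho\|\,\|S\|_{\UI}=(\pi/2)\|S\|_{\UI}/\delta$, i.e.\ \eqref{ineq:BDM-UI}; note that, in contrast to~(b), this needs no structure on the spectra beyond the separation $\delta$. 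The crux---which I would quote from the cited references rather than reprove---is the construction of $\mu$ with the optimal total variation $\pi/2$, equivalently the fact that the minimal $L^1$ norm of a function whose Fourier transform agrees with $1/x$ outside $[-1,1]$ equals $\pi/2$.
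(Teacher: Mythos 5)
The paper states this lemma purely as a summary of known results, with citations to Davis--Kahan and Bhatia--Davis--McIntosh (and a pointer to \cite{li:1999d} for a review), and offers no proof of its own, so there is nothing internal to compare against. Your proposal is a correct, self-contained proof of all three parts. The diagonalization argument for existence/uniqueness and for part~(a) is exactly the standard one, and the entrywise reduction correctly isolates why the Frobenius case is special. For part~(b), your recentering-plus-Neumann-series argument (with the monotonicity observation to pass from $\|A\|_2\le\eta$, $\|B^{-1}\|_2\le(\eta+\delta)^{-1}$ to the clean bound $1/\delta$) is valid and is essentially the proof found in Bhatia's \emph{Matrix Analysis}; it differs in presentation from Davis--Kahan's original operator-theoretic derivation but is if anything more elementary, and the symmetric handling of the second alternative is correct. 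For part~(c), you correctly quote rather than reprove the Bhatia--Davis--McIntosh construction of a measure with total variation $\pi/2$ whose Fourier transform equals $1/x$ for $|x|\ge1$, and the verification of the representation $X=\int e^{-\mathrm{i}tB}Se^{\mathrm{i}tA}\,d\rho(t)$ on the eigenbasis $w_jv_k^{\HH}$, followed by unitary invariance of $\|\cdot\|_{\UI}$, is exactly right; since the lemma only asserts the bound and not its optimality, invoking the cited references for the measure is the appropriate amount of work. No gaps.
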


The results of \Cref{lm:BdSylv} have an alternative interpretation through a linear operator
\begin{equation}\label{eq:L(X):defn}
\bL\,:\,X\in\bbC^{s\times r}\,\to\, \bL(X)=XA-BX\in\scrB,
\end{equation}
endowed with certain unitarily invariant norm  $\|\cdot\|_{\UI}$ on $\bbC^{s\times r}$, including
the spectral and Frobenius norms as special ones, where $A\in\bbR^{r\times r}$ and $B\in\bbR^{s\times s}$ are Hermitian.
With any given $\|\cdot\|_{\UI}$, there is an induced operator norm $\|\cdot\|$ on the linear operator from $\bbC^{s\times r}$ to itself.
Translating the results of \Cref{lm:BdSylv} yields the following corollary.

\begin{corollary}\label{cor:BdSylv}
Let $A\in\bbR^{r\times r}$ and $B\in\bbR^{s\times s}$ be Hermitian, and define linear operator $\bL$
on $(\bbC^{s\times r},\|\cdot\|_{\UI})$ as in \eqref{eq:L(X):defn}.
If $\eig(A)\cap\eig(B)=\emptyset$, then $\bL$ is invertible, and, furthermore, the following statements
hold.
\begin{enumerate}[{\rm (i)}]
  \item With $\|\cdot\|_{\UI}=\|\cdot\|_{\F}$ and $\delta$ as in \eqref{ineq:DK-F}, we have
        $\|\bL^{-1}\|^{-1}=\delta$;
  \item With general $\|\cdot\|_{\UI}$ and assuming \eqref{eq:stric-sepd},
        then $\|\bL^{-1}\|^{-1}=\delta$,
        where $\delta$ is the largest $|\mu-\nu|$ for some $\mu\in\eig(A),\,\nu\in\eig(B)$ and
        subject to \eqref{eq:stric-sepd};
  \item With general $\|\cdot\|_{\UI}$  and $\delta$ as in \eqref{ineq:DK-F},
        we have
        $\|\bL^{-1}\|^{-1}\ge (2/\pi)\delta$.
\end{enumerate}
\end{corollary}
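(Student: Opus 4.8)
The plan is to restate every bound of \Cref{lm:BdSylv} as an estimate on the induced operator norm of $\bL$, and then to pair each such estimate with a matching lower bound for $\|\bL^{-1}\|$ coming from an explicit test matrix. Invertibility of $\bL$ is immediate: it is precisely the unique solvability of $XA-BX=S$ asserted in \Cref{lm:BdSylv}. Since $\bL$ is then a bijection of $\bbC^{s\times r}$ onto itself, substituting $S=\bL(X)$ in the definition of the operator norm gives
\[
\|\bL^{-1}\|=\sup_{S\ne0}\frac{\|\bL^{-1}(S)\|_{\UI}}{\|S\|_{\UI}}
=\Big(\inf_{X\ne0}\frac{\|\bL(X)\|_{\UI}}{\|X\|_{\UI}}\Big)^{-1},
\qquad\text{i.e.}\qquad
\|\bL^{-1}\|^{-1}=\inf_{X\ne0}\frac{\|XA-BX\|_{\UI}}{\|X\|_{\UI}}.
\]
Read through this identity, \eqref{ineq:DK-F} says $\|\bL^{-1}\|\le1/\delta$, \eqref{ineq:DK-UI} (under \eqref{eq:stric-sepd}) says $\|\bL^{-1}\|\le1/\delta$, and \eqref{ineq:BDM-UI} says $\|\bL^{-1}\|\le(\pi/2)/\delta$; equivalently $\|\bL^{-1}\|^{-1}\ge\delta$, $\ge\delta$, and $\ge(2/\pi)\delta$. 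This already establishes (iii) in full and the ``$\ge$'' halves of (i) and (ii).

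For the reverse inequalities in (i) and (ii) I would exhibit a near-minimizer of the infimum above. Because $A$ and $B$ are Hermitian, choose unit eigenvectors $u\in\bbC^r$, $v\in\bbC^s$ with $Au=\mu u$ and $Bv=\nu v$, and set $X_0=vu^{\HH}$. Using $u^{\HH}A=\mu u^{\HH}$, a one-line computation gives $\bL(X_0)=X_0A-BX_0=(\mu-\nu)X_0$, so $X_0$ is an eigenvector of $\bL$ with eigenvalue $\mu-\nu\ne0$; hence $\bL^{-1}(X_0)$ is colinear with $X_0$ and $\|\bL^{-1}(X_0)\|_{\UI}/\|X_0\|_{\UI}=1/|\mu-\nu|$ (indeed in \emph{any} norm). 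Therefore $\|\bL^{-1}\|^{-1}\le|\mu-\nu|$ for every $\mu\in\eig(A)$, $\nu\in\eig(B)$, and so $\|\bL^{-1}\|^{-1}\le\min_{\mu\in\eig(A),\,\nu\in\eig(B)}|\mu-\nu|$. For the $\delta$ of \eqref{ineq:DK-F} this minimum \emph{is} $\delta$, which combined with the lower bound proves (i).

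To finish (ii) it remains to check that, under \eqref{eq:stric-sepd}, the largest admissible $\delta$ equals $\min_{\mu,\nu}|\mu-\nu|$. I would first observe that \eqref{eq:stric-sepd} is preserved, with the same $\delta$, after shrinking the interval to $[\alpha,\beta]=[\lambda_{\min}(A),\lambda_{\max}(A)]$, and that $\delta>0$ forces $\eig(B)\subset(-\infty,\alpha)\cup(\beta,\infty)$; maximizing $\delta$ subject to $\max\{\nu\in\eig(B):\nu<\alpha\}\le\alpha-\delta$ and $\min\{\nu\in\eig(B):\nu>\beta\}\ge\beta+\delta$ then yields $\delta=\min_{\mu\in\eig(A),\,\nu\in\eig(B)}|\mu-\nu|$ (the ``or'' branch of \eqref{eq:stric-sepd} being symmetric). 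The rank-one test above now gives $\|\bL^{-1}\|^{-1}\le\delta$, matching the bound from \eqref{ineq:DK-UI}, and (ii) follows. There is no serious obstacle in any of this; the one place that needs care is the interval bookkeeping of this last paragraph, everything else being a direct transcription of \Cref{lm:BdSylv} through the operator-norm identity.
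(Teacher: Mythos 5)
Your proof is correct and takes essentially the same route as the paper: lower bounds on $\|\bL^{-1}\|^{-1}$ from \Cref{lm:BdSylv} via the operator-norm identity, plus a rank-one eigenvector of $\bL$ to force equality in (i) and (ii). You are a bit more careful than the paper in two respects: your test matrix $X_0=vu^{\HH}$ has the correct shape $s\times r$ (the paper writes $X=\bx\by^{\HH}$ with $\bx$ an eigenvector of $A$, which transposes the roles and gives the wrong shape), and you spell out the bookkeeping showing that the largest $\delta$ admissible in \eqref{eq:stric-sepd} coincides with $\min_{\mu\in\eig(A),\nu\in\eig(B)}|\mu-\nu|$, a point the paper leaves implicit.
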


\begin{proof}
To see these, we note that $\|\bL^{-1}\|^{-1}=\min\{\gamma\,:\,\|\bL(X)\|\ge\gamma\|(X)\|\}$. Hence,
immediately it follows that $\|\bL^{-1}\|^{-1}\ge\delta$ for items (i) and (ii) and
$\|\bL^{-1}\|^{-1}\ge (2/\pi)\delta$ for item (iii). The equality sign in items (i) and (ii)
are achieved by letting $X=\bx\by^{\HH}$, where $\bx,\,\by$ are unit
eigenvectors of $A$ and $B$, respectively, such that
$A\bx=\mu\,\bx$ and $B\by=\nu\,\by$, assuming $\delta=|\mu-\nu|$, and hence
$$
\bL(X)=(\mu-\nu)\,\bx\by^{\HH}=(\mu-\nu) X.
$$
Also because $\sv(X)$ consists of one nonzero singular value $1$ and some copies of zeros,
$$
\|X\|_{\UI}=\|X\|_2=1,
$$
implying $\|\bL(X)\|_{\UI}=\delta\|X\|_{\UI}$ for item~(i) or item~(ii).
\end{proof}

For subspace variation associated with SVD, a set of two coupled Sylvester equations come into play:
\begin{equation}\label{eq:CpldSyvl}
XA-BY=S,\quad YA^{\HH}-B^{\HH}X=T,
\end{equation}
where $A\in\bbC^{r\times r}$, $B\in\bbC^{s\times t}$, $S\in\bbC^{s\times r}$, and $T\in\bbC^{t\times r}$.
Note that $B$ is possibly a nonsquare matrix, in which case we pad some blocks of zeros to
$B$, $Y$, and $S$ or $T$ to yield an equivalent set of two coupled Sylvester equations with $B$ being square.
Specifically, if $s>t$, let
$$
B_{\ext}=[B,0_{s\times (s-t)}],\,
Y_{\ext}=\begin{bmatrix}
     Y \\
     0_{(s-t)\times r}
   \end{bmatrix},\,
S_{\ext}=S,\,\,
T_{\ext}=\begin{bmatrix}
     T \\
     0_{(s-t)\times r}
     \end{bmatrix},
$$
whereas if $s<t$, let
$$
B_{\ext}=\begin{bmatrix}
     B \\
     0_{(t-s)\times t}
   \end{bmatrix},\,
Y_{\ext}=[Y, 0_{(t-s)\times r}],\,
S_{\ext}=\begin{bmatrix}
     S \\
     0_{(t-s)\times r}
     \end{bmatrix},\,\,
T_{\ext}=T.
$$
Then \eqref{eq:CpldSyvl} is equivalent to
\begin{equation}\label{eq:CpldSyvl-ext}
XA-B_{\ext}Y_{\ext}=S_{\ext},\quad Y_{\ext}A^{\HH}-B_{\ext}^{\HH}X=T_{\ext},
\end{equation}
which can be merged into one Sylvester equation
\begin{equation}\label{eq:CpldSyvl'}
\begin{bmatrix}
  0 & X \\
  Y_{\ext} & 0
\end{bmatrix}\begin{bmatrix}
               0 & A^{\HH} \\
               A & 0
             \end{bmatrix}
-\begin{bmatrix}
   0 & B_{\ext} \\
   B_{\ext}^{\HH} & 0
 \end{bmatrix}\begin{bmatrix}
  0 & X \\
  Y_{\ext} & 0
\end{bmatrix}=\begin{bmatrix}
                S_{\ext} & 0 \\
                0 & T_{\ext}
              \end{bmatrix}.
\end{equation}
It can be seen that
\begin{equation}\label{eq:eig=sv}
\eig(\begin{bmatrix}
               0 & A^{\HH} \\
               A & 0
             \end{bmatrix})=\sv(A)\cup(-\sv(A)), \,
\eig(\begin{bmatrix}
   0 & B_{\ext} \\
   B_{\ext}^{\HH} & 0
 \end{bmatrix})=\sv_{\ext}(B)\cup(-\sv_{\ext}(B)),
\end{equation}
where negating a set means negating each element of the set, and
\begin{equation}\label{eq:UI-invariant}
\left\|\begin{bmatrix}
  0 & X \\
  Y_{\ext} & 0
\end{bmatrix}\right\|_{\UI}
=\left\|\begin{bmatrix}
  0 & X \\
  Y & 0
\end{bmatrix}\right\|_{\UI}, \quad
\left\|\begin{bmatrix}
                S_{\ext} & 0 \\
                0 & T_{\ext}
              \end{bmatrix}\right\|_{\UI}
=\left\|\begin{bmatrix}
                S & 0 \\
                0 & T
              \end{bmatrix}\right\|_{\UI}
\end{equation}
for any unitary invariant norm. Apply \Cref{lm:BdSylv} to get

\begin{lemma}\label{lm:BdCpldSylv}
Consider a set of two coupled Sylvester equations \eqref{eq:CpldSyvl}, where $A\in\bbC^{r\times r}$, $B\in\bbC^{s\times t}$, $S\in\bbC^{s\times r}$, and $T\in\bbC^{t\times r}$.
If $\sv(A)\cap\sv_{\ext}(B)=\emptyset$, then the set of equations has a unique solution pair
$(X,Y)\in\bbC^{s\times r}\times\bbC^{t\times r}$. Furthermore, the following statements
hold:
\begin{enumerate}[{\rm (a)}]
  \item  we have {\rm \cite{stew:1973}}
        \begin{equation}\label{ineq:stew-F}
        \sqrt{\|X\|_{\F}^2+\|Y\|_{\F}^2}\le\left.\sqrt{\|S\|_{\F}^2+\|T\|_{\F}^2}\right/\delta,\quad
        \delta:=\min_{\omega\in\sv(A),\,\gamma\in\sv_{\ext}(B)}\,|\omega-\gamma|;
        \end{equation}
  \item if $\delta:=\sigma_{\min}(A)-\sigma_{\max}(B)>0$,
        then for any unitarily invariant norm $\|\cdot\|_{\UI}$,
       \begin{subequations}\label{ineq:CpldSylv-UI}
       \begin{align}
        \left\|\begin{bmatrix}
          0 & X \\
          Y & 0
        \end{bmatrix}\right\|_{\UI}&\le\frac 1{\delta}\left\|\begin{bmatrix}
                        S & 0 \\
                        0 & T
                      \end{bmatrix}\right\|_{\UI}, \label{ineq:CpldSylv-UI-1}\\
       \max\{\|X\|_{\UI},\|Y\|_{\UI}\}&\le\frac 1{\delta}\max\{\|S\|_{\UI},\|T\|_{\UI}\},  \label{ineq:CpldSylv-UI-2}
       \end{align}
       \end{subequations}
        and in particular for the spectral norm
        \begin{equation}\label{ineq:CpldSylv-2}
        \max\{\|X\|_2,\|Y\|_2\}\le \frac 1{\delta}\max\{\|S\|_2,\|T\|_2\};
        \end{equation}
  \item we have
        \begin{subequations}\label{ineq:CpldSylv-UI'}
        \begin{align}
        \left\|\begin{bmatrix}
          0 & X \\
          Y & 0
        \end{bmatrix}\right\|_{\UI}&\le\frac {\pi}2\frac 1{\delta}\left\|\begin{bmatrix}
                        S & 0 \\
                        0 & T
                      \end{bmatrix}\right\|_{\UI}, \label{ineq:CpldSylv-UI'-1} \\
       \max\{\|X\|_{\UI},\|Y\|_{\UI}\}&\le\pi\frac 1{\delta}\max\{\|S\|_{\UI},\|T\|_{\UI}\}, \label{ineq:CpldSylv-UI'-2}
        \end{align}
        \end{subequations}
        where $\delta$ is as in \eqref{ineq:stew-F},
        and in particular for the spectral norm
        \begin{equation}\label{ineq:CpldSylv-2'}
        \max\{\|X\|_2,\|Y\|_2\}\le \frac {\pi}2\frac 1{\delta}\max\{\|S\|_2,\|T\|_2\}.
        \end{equation}
\end{enumerate}
\end{lemma}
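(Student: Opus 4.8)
The plan is to lean on the reduction already prepared in \eqref{eq:CpldSyvl}--\eqref{eq:UI-invariant}. After zero-padding, \eqref{eq:CpldSyvl} becomes the single Sylvester equation \eqref{eq:CpldSyvl'}, whose coefficient matrices $\widehat A=\bigl[\begin{smallmatrix}0&A^{\HH}\\A&0\end{smallmatrix}\bigr]$ and $\widehat B=\bigl[\begin{smallmatrix}0&B_{\ext}\\B_{\ext}^{\HH}&0\end{smallmatrix}\bigr]$ are Hermitian with $\eig(\widehat A)=\sv(A)\cup(-\sv(A))$ and $\eig(\widehat B)=\sv_{\ext}(B)\cup(-\sv_{\ext}(B))$ by \eqref{eq:eig=sv}. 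Since singular values are nonnegative, $|\mu-\nu|\ge\min_{\omega\in\sv(A),\,\gamma\in\sv_{\ext}(B)}|\omega-\gamma|$ for every $\mu\in\eig(\widehat A)$ and $\nu\in\eig(\widehat B)$, with equality attained, so the spectral gap of $(\widehat A,\widehat B)$ equals the $\delta$ of \eqref{ineq:stew-F}; in particular $\eig(\widehat A)\cap\eig(\widehat B)=\emptyset$ exactly when $\sv(A)\cap\sv_{\ext}(B)=\emptyset$. Under the latter, \Cref{lm:BdSylv} gives a unique solution to \eqref{eq:CpldSyvl'}, which uniqueness then forces into the anti-block-diagonal shape $\bigl[\begin{smallmatrix}0&X\\Y_{\ext}&0\end{smallmatrix}\bigr]$ (with $Y_{\ext}$ carrying the prescribed zero padding, since $A$ is invertible whenever $s\ne t$), so a unique pair $(X,Y)$ is recovered.

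For the three ``full-block'' bounds \eqref{ineq:stew-F}, \eqref{ineq:CpldSylv-UI-1}, and \eqref{ineq:CpldSylv-UI'-1} I would apply parts (a), (b), and (c) of \Cref{lm:BdSylv}, respectively, to \eqref{eq:CpldSyvl'} and then use the norm identities \eqref{eq:UI-invariant} to discard the extensions; \eqref{ineq:stew-F} additionally uses $\|\bigl[\begin{smallmatrix}0&X\\Y&0\end{smallmatrix}\bigr]\|_{\F}^2=\|X\|_{\F}^2+\|Y\|_{\F}^2$ and its counterpart for the right-hand side. Applying \Cref{lm:BdSylv}(b) requires checking the strict separation \eqref{eq:stric-sepd} for $(\widehat A,\widehat B)$: with $\alpha=-\sigma_{\max}(B)$, $\beta=\sigma_{\max}(B)$, and $\delta=\sigma_{\min}(A)-\sigma_{\max}(B)$ one has $\beta+\delta=\sigma_{\min}(A)$ and $\alpha-\delta=-\sigma_{\min}(A)$, so $\eig(\widehat B)\subseteq[\alpha,\beta]$ while $\eig(\widehat A)\subseteq(-\infty,\alpha-\delta]\cup[\beta+\delta,\infty)$, which is the second alternative of \eqref{eq:stric-sepd}; thus \Cref{lm:BdSylv}(b) (equivalently \Cref{cor:BdSylv}(ii)) delivers \eqref{ineq:CpldSylv-UI-1}, and \Cref{lm:BdSylv}(c) delivers \eqref{ineq:CpldSylv-UI'-1}.

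The $\max$-type bounds \eqref{ineq:CpldSylv-UI-2}, \eqref{ineq:CpldSylv-2}, \eqref{ineq:CpldSylv-UI'-2}, and \eqref{ineq:CpldSylv-2'} cannot in general be read off from the full-block ones, because $\|\bigl[\begin{smallmatrix}S&0\\0&T\end{smallmatrix}\bigr]\|_{\UI}$ may be as large as $\|S\|_{\UI}+\|T\|_{\UI}$, not $\max\{\|S\|_{\UI},\|T\|_{\UI}\}$. For part (c) I would simply accept that factor of $2$: $X$ and $Y$ are submatrices of $\bigl[\begin{smallmatrix}0&X\\Y&0\end{smallmatrix}\bigr]$, so $\max\{\|X\|_{\UI},\|Y\|_{\UI}\}\le\|\bigl[\begin{smallmatrix}0&X\\Y&0\end{smallmatrix}\bigr]\|_{\UI}$, and combining this with \eqref{ineq:CpldSylv-UI'-1} and $\|\bigl[\begin{smallmatrix}S&0\\0&T\end{smallmatrix}\bigr]\|_{\UI}\le\|S\|_{\UI}+\|T\|_{\UI}\le 2\max\{\|S\|_{\UI},\|T\|_{\UI}\}$ gives \eqref{ineq:CpldSylv-UI'-2}; and \eqref{ineq:CpldSylv-2'} is merely the $\|\cdot\|_{\UI}=\|\cdot\|_2$ case of \eqref{ineq:CpldSylv-UI'-1}, for which $\|\bigl[\begin{smallmatrix}S&0\\0&T\end{smallmatrix}\bigr]\|_2=\max\{\|S\|_2,\|T\|_2\}$ already, incurring no factor of $2$.

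For part (b), where the strict gap $\sigma_{\min}(A)>\sigma_{\max}(B)$ is in force, I would sidestep \eqref{eq:CpldSyvl'} to avoid that loss and argue directly. Assume without loss of generality $\|X\|_{\UI}\ge\|Y\|_{\UI}$ (otherwise use the second equation of \eqref{eq:CpldSyvl} by symmetry). Since $XA=S+BY$ and $X=(XA)A^{-1}$ with $\|A^{-1}\|_2=1/\sigma_{\min}(A)$, while $\|BY\|_{\UI}\le\sigma_{\max}(B)\|Y\|_{\UI}$, we get $\sigma_{\min}(A)\|X\|_{\UI}\le\|XA\|_{\UI}\le\|S\|_{\UI}+\sigma_{\max}(B)\|X\|_{\UI}$, hence $\delta\|X\|_{\UI}\le\|S\|_{\UI}\le\max\{\|S\|_{\UI},\|T\|_{\UI}\}$; together with $\|Y\|_{\UI}\le\|X\|_{\UI}$ this is exactly \eqref{ineq:CpldSylv-UI-2}, and \eqref{ineq:CpldSylv-2} is its spectral-norm instance. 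The one genuinely delicate point is therefore the sharp constant $1/\delta$ in \eqref{ineq:CpldSylv-UI-2}: the merged-equation route, the natural tool everywhere else, would lose a factor of $2$ there, and it is precisely the strict separation $\sigma_{\min}(A)>\sigma_{\max}(B)$ that makes the elementary decoupled estimate above available and restores the sharp constant.
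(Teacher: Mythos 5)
Your proof is correct and follows essentially the same route as the paper: the full-block bounds \eqref{ineq:stew-F}, \eqref{ineq:CpldSylv-UI-1}, \eqref{ineq:CpldSylv-UI'-1} are obtained by applying \Cref{lm:BdSylv}(a)--(c) to the merged equation \eqref{eq:CpldSyvl'} and then using \eqref{eq:UI-invariant}; the spectral-norm cases follow by \eqref{eq:UI=2:equiv}; \eqref{ineq:CpldSylv-UI'-2} is derived from \eqref{ineq:CpldSylv-UI'-1} at the cost of a factor $2$; and \eqref{ineq:CpldSylv-UI-2} is proved directly (not via the merged equation) by pairing $\|XA\|_{\UI}\ge\sigma_{\min}(A)\|X\|_{\UI}$ against $\|BY\|_{\UI}\le\sigma_{\max}(B)\|Y\|_{\UI}$ in whichever of the two coupled equations corresponds to the larger of $\|X\|_{\UI}$, $\|Y\|_{\UI}$ — exactly the paper's case split. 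Your explicit check of the separation condition \eqref{eq:stric-sepd} for $(\widehat A,\widehat B)$ and your remark on why the unique solution of \eqref{eq:CpldSyvl'} is forced into anti-block-diagonal form are details the paper leaves implicit, but they do not change the argument.
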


\begin{proof}
Recall \eqref{eq:eig=sv} and \eqref{eq:UI-invariant}.
The inequality in \eqref{ineq:stew-F} is essentially Stewart's \cite[Theorem~6.2]{stew:1973}, but here as a corollary
of \Cref{lm:BdSylv} applied to \eqref{eq:CpldSyvl'}.
Inequalities \eqref{ineq:CpldSylv-UI-1} and \eqref{ineq:CpldSylv-UI'-1}
are also corollaries
of \Cref{lm:BdSylv} applied to \eqref{eq:CpldSyvl'}.
Inequalities \eqref{ineq:CpldSylv-2} and \eqref{ineq:CpldSylv-2'} follow
from \eqref{ineq:CpldSylv-UI} and \eqref{ineq:CpldSylv-UI'-1}, respectively, due to
\begin{equation}\label{eq:UI=2:equiv}
\left\|\begin{bmatrix}
  0 & X \\
  Y & 0
\end{bmatrix}\right\|_2=\max\{\|X\|_2,\|Y\|_2\}, \quad
\left\|\begin{bmatrix}
                        S & 0 \\
                        0 & T
                      \end{bmatrix}\right\|_2=\max\{\|S\|_2,\|T\|_2\}.
\end{equation}
It remains to show \eqref{ineq:CpldSylv-UI-2} and \eqref{ineq:CpldSylv-UI'-2}.
Inequality \eqref{ineq:CpldSylv-UI-2} is essentially implied in \cite[section 3]{wedi:1972}, but we will present
a quick proof anyway.
Note that for the case
$\delta:=\sigma_{\min}(A)-\sigma_{\max}(B)>0$.
We have
\begin{subequations}\label{eq:4bT:pf-2}
\begin{align}
\|X A-BY\|_{\UI}
 &\ge \|X A\|_{\UI}-\|BY\|_{\UI}\nonumber \\
 &\ge \|X\|_{\UI}\|A^{-1}\|_2^{-1}-\|B\|_2\|Y\|_{\UI}\nonumber \\
 &=\|X\|_{\UI}\sigma_{\min}(A)-\sigma_{\max}(B)\|Y\|_{\UI}. \label{eq:4bT1}
\end{align}
Similarly, we can get
\begin{equation}\label{eq:4bT2}
\|Y A^{\HH}-B^{\HH}X\|_{\UI}
  \ge\sigma_{\min}(A)\|Y\|_{\UI}-\sigma_{\max}(B)\|X\|_{\UI}.
\end{equation}
\end{subequations}
There are two cases to consider. If $\|X\|_{\UI}\ge\|Y\|_{\UI}$, then by \eqref{eq:4bT1} we get
\begin{subequations}\label{eq:4bT:pf-3}
\begin{equation}\label{eq:T:case1}
\|X A-BY\|_{\UI}
   \ge\delta\|X\|_{\UI}=\delta\,\max\{\|X\|_{\UI},\|Y\|_{\UI}\};
\end{equation}
if, on the other hand, $\|X\|_{\UI}<\|Y\|_{\UI}$, then by \eqref{eq:4bT2} we get
\begin{equation}\label{eq:T:case2}
\|Y A^{\HH}-B^{\HH}X\|_{\UI}
   \ge\delta\|Y\|_{\UI}=\delta\,\max\{\|X\|_{\UI},\|Y\|_{\UI}\}.
\end{equation}
\end{subequations}
Together, \eqref{eq:T:case1} and \eqref{eq:T:case2} yield
\begin{align*}
\max\left\{\|X A-BY\|_{\UI},
                               \|Y A^{\HH}-B^{\HH}X\|_{\UI}\right\}
      \ge\delta\,\max\{\|X\|_{\UI},\|Y\|_{\UI}\},
\end{align*}
as expected. Finally, noticing that
\begin{align*}
\max\{\|X\|_{\UI},\|Y\|_{\UI}\}
   &\le\left\|\begin{bmatrix}
          0 & X \\
          Y & 0
        \end{bmatrix}\right\|_{\UI}, \\
\left\|\begin{bmatrix}
                        S & 0 \\
                        0 & T
                      \end{bmatrix}\right\|_{\UI}
   &\le\|S\|_{\UI}+\|T\|_{\UI} \\
   &\le 2\max\{\|S\|_{\UI},\|T\|_{\UI}\},
\end{align*}
we see that \eqref{ineq:CpldSylv-UI'-1} implies \eqref{ineq:CpldSylv-UI'-2}.
\end{proof}

\Cref{lm:BdCpldSylv} has more than what we need later. In fact, we will only use \eqref{ineq:CpldSylv-2} and \eqref{ineq:CpldSylv-2'} in our later development.

The results of \Cref{lm:BdCpldSylv} have an alternative interpretation through a linear operator
\begin{equation}\label{eq:T(X):defn}
\begin{array}{cccc}
\bT\,:&\scrB:=\bbC^{s\times r}\times\bbC^{t\times r}&\,\to\,& \bT(X,Y)\in\scrB \\
  &(X,Y)&\,\to\,& (XA-BY, YA^{\HH}-B^{\HH}X),
\end{array}
\end{equation}
endowed with certain norm on $\scrB$ to make it a Banach space, including (cf. those used in \Cref{lm:BdCpldSylv})
\begin{subequations}\label{eq:norm4scrB}
\begin{equation}\label{eq:norm4scrB-1}
\|(X,Y)\|=\left\|\begin{bmatrix}
  0 & X \\
  Y & 0
\end{bmatrix}\right\|_{\UI}\equiv \left\|\begin{bmatrix}
  X & 0 \\
  0 & Y
\end{bmatrix}\right\|_{\UI}
\end{equation}
for any $(X,Y)\in\scrB$, where $\|\cdot\|_{\UI}$ is any given unitarily invariant norm. Two particular ones are
\begin{equation}\label{eq:norm4scrB-2}
\|(X,Y)\|=\max\{\|X\|_2,\|Y\|_2\}
\quad\mbox{or}\quad
\sqrt{\|X\|_{\F}^2+\|Y\|_{\F}^2},
\end{equation}
upon realizing $\|\cdot\|_{\UI}$ in \eqref{eq:norm4scrB-1} as the spectral norm or the Frobenius norm.
Another possible endowed norm on $\scrB$ is
\begin{equation}\label{eq:norm4scrB-3}
\|(X,Y)\|=\max\{\|X\|_{\UI},\|Y\|_{\UI}\}.
\end{equation}
\end{subequations}
With each endowed norm, there is an induced operator norm $\|\cdot\|$ on the linear operator from $\scrB$ to itself.
Translating the results of \Cref{lm:BdCpldSylv} yields the following corollary.

\begin{corollary}\label{cor:BdCpldSylv}
Let $A\in\bbC^{r\times r}$, $B\in\bbC^{s\times t}$, $S\in\bbC^{s\times r}$, and $T\in\bbC^{t\times r}$, and define
linear operator $\bT$
on $(\scrB:=\bbC^{s\times r}\times\bbC^{t\times r},\|\cdot\|)$ as in \eqref{eq:L(X):defn} where
$\|\cdot\|$ is given by one of those in \eqref{eq:norm4scrB}.
If $\sv(A)\cap\sv_{\ext}(B)=\emptyset$, then $\bT$ is invertible, and, furthermore, the following statements
hold.
\begin{enumerate}[{\rm (i)}]
  \item With $\|(X,Y)\|=\sqrt{\|X\|_{\F}^2+\|Y\|_{\F}^2}$ and $\delta$ as in \eqref{ineq:stew-F}, we have
        $ 
        \|\bT^{-1}\|^{-1}=\delta
        $ 
        {\rm \cite{stew:1973}};
  \item With either \eqref{eq:norm4scrB-1} or \eqref{eq:norm4scrB-3},
        if $\delta:=\sigma_{\min}(A)-\sigma_{\max}(B)>0$,
        then $\|\bT^{-1}\|^{-1}=\delta$;
  \item With \eqref{eq:norm4scrB-1} and  $\delta$ as in \eqref{ineq:stew-F}, we have
        $\|\bT^{-1}\|^{-1}\ge (2/\pi)\delta$;
  \item With \eqref{eq:norm4scrB-3} and  $\delta$ as in \eqref{ineq:stew-F}, we have
        $\|\bT^{-1}\|^{-1}\ge (1/\pi)\delta$.
\end{enumerate}
\end{corollary}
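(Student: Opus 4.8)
The plan is to prove \Cref{cor:BdCpldSylv} in exactly the same way \Cref{cor:BdSylv} was proved, the only structural input being the elementary identity $\|\bT^{-1}\|^{-1}=\min\{\gamma:\|\bT(Z)\|\ge\gamma\,\|Z\|\ \text{for all }Z\in\scrB\}$, valid for any invertible bounded operator on a normed space. That $\bT$ is invertible under $\sv(A)\cap\sv_{\ext}(B)=\emptyset$ is precisely the unique‑solvability assertion of \Cref{lm:BdCpldSylv}, so only the value (items~(i),(ii)) or a lower estimate (items~(iii),(iv)) of $\|\bT^{-1}\|^{-1}$ remains at stake.

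First I would read off the lower bounds. Putting $(S,T)=\bT(X,Y)$, every displayed estimate in \Cref{lm:BdCpldSylv} takes the shape $\|(X,Y)\|\le c\,\|(S,T)\|=c\,\|\bT(X,Y)\|$ as soon as the endowed norm on $\scrB$ is chosen to make the two sides of that estimate equal to $\|(X,Y)\|$ and $\|\bT(X,Y)\|$; it then yields $\|\bT^{-1}\|^{-1}\ge 1/c$ at once. Concretely, \eqref{ineq:stew-F} with the Frobenius choice in \eqref{eq:norm4scrB-2} gives the bound ``$\ge\delta$'' of item~(i); \eqref{ineq:CpldSylv-UI-1}, whose right‑hand side equals $\|\bT(X,Y)\|$ for the norm \eqref{eq:norm4scrB-1} by the identity $\bigl\|\bigl[\begin{smallmatrix}0&X\\Y&0\end{smallmatrix}\bigr]\bigr\|_{\UI}=\bigl\|\bigl[\begin{smallmatrix}X&0\\0&Y\end{smallmatrix}\bigr]\bigr\|_{\UI}$ recorded there, and \eqref{ineq:CpldSylv-UI-2} give the two ``$\ge\delta$'' halves of item~(ii) for the norms \eqref{eq:norm4scrB-1} and \eqref{eq:norm4scrB-3}; and \eqref{ineq:CpldSylv-UI'-1}, \eqref{ineq:CpldSylv-UI'-2} give items~(iii) and~(iv).

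For items (i) and (ii) it then remains to prove $\|\bT^{-1}\|^{-1}\le\delta$, which I would do by producing—just as the rank‑one $X=\bx\by^{\HH}$ does in \Cref{cor:BdSylv}—a nonzero pair attaining the ratio $\delta$. Pick singular values $\mu\in\sv(A)$, $\gamma\in\sv_{\ext}(B)$ with $\delta=|\mu-\gamma|$ (for item~(ii) take $\mu=\sigma_{\min}(A)$, $\gamma=\sigma_{\max}(B)$, which belongs to $\sv(B)$), together with unit vectors $u,v\in\bbC^r$ satisfying $Av=\mu u$, $A^{\HH}u=\mu v$. If $\gamma\in\sv(B)$, pick unit $q\in\bbC^s$, $p\in\bbC^t$ with $Bp=\gamma q$, $B^{\HH}q=\gamma p$ and set $(X,Y)=(qu^{\HH},pv^{\HH})$; a one‑line computation then gives $\bT(X,Y)=\bigl((\mu-\gamma)qv^{\HH},(\mu-\gamma)pu^{\HH}\bigr)$, and since the two rank‑one summands of $\bigl[\begin{smallmatrix}0&X\\Y&0\end{smallmatrix}\bigr]$ (and of $\bigl[\begin{smallmatrix}S&0\\0&T\end{smallmatrix}\bigr]$) are built from mutually orthogonal columns and rows, these block matrices have singular‑value multisets $\{1,1,0,\ldots\}$ and $\{\delta,\delta,0,\ldots\}$. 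Hence $\|\bT(X,Y)\|=\delta\,\|(X,Y)\|$ simultaneously for the Frobenius norm \eqref{eq:norm4scrB-2}, for \eqref{eq:norm4scrB-1} with every $\|\cdot\|_{\UI}$, and for \eqref{eq:norm4scrB-3}; combined with the lower bounds this gives $\|\bT^{-1}\|^{-1}=\delta$. (Item~(i) may alternatively just be cited from \cite{stew:1973}.)

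I expect the one genuinely non‑routine point to be the remaining possibility in the last step: that the minimizing $\gamma$ is one of the $|s-t|$ padded zeros of $\sv_{\ext}(B)$, which can occur only when $B$ is nonsquare of full rank $\min\{s,t\}$ (and never for item~(ii), where $\gamma=\sigma_{\max}(B)\in\sv(B)$). In that case the would‑be singular vectors of the padded zero do not respect the zero‑padding built into $Y_{\ext}$ (or $X$), so the pair $(qu^{\HH},pv^{\HH})$ is unavailable; instead, for $s>t$ one has $\delta=\mu$, and choosing a unit $q\in\bbC^s$ with $B^{\HH}q=0$ and setting $(X,Y)=(qu^{\HH},0)$ gives $\bT(X,Y)=(\mu qv^{\HH},0)$, whose norm ratio is again $\mu=\delta$ in all three norms; the case $s<t$ is symmetric with $(X,Y)=(0,pv^{\HH})$ for a unit $p\in\ker B$. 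Apart from isolating this degenerate case, the whole argument is the bookkeeping of matching the inequalities of \Cref{lm:BdCpldSylv} to the endowed norms listed in \eqref{eq:norm4scrB}.
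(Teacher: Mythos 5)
Your proposal is correct and follows the same route as the paper: read the lower bounds on $\|\bT^{-1}\|^{-1}$ directly off \Cref{lm:BdCpldSylv} by matching each inequality to the corresponding endowed norm in \eqref{eq:norm4scrB}, and then attain equality in items (i) and (ii) with a rank-one pair built from singular vectors, exactly as the paper does with $X=\by\bu^{\HH}$, $Y=\bx\bv^{\HH}$. You go somewhat further than the paper in two respects worth keeping: you take the attaining $(\mu,\gamma)$ to be the actual minimizing pair in item (i) rather than always $(\sigma_{\min}(A),\sigma_{\max}(B))$ (the paper's construction only realizes $\delta$ for item (ii), and item (i) is cited to Stewart), and you explicitly handle the degenerate case $\gamma\in\sv_{\ext}(B)\setminus\sv(B)$ with $(X,Y)=(qu^{\HH},0)$, a case the paper's proof does not address.
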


\begin{proof}
To see these, we note that $\|\bT^{-1}\|^{-1}=\min\{\gamma\,:\,\|\bT(X,Y)\|\ge\gamma\|(X,Y)\|\}$. Hence,
immediately it follows that $\|\bT^{-1}\|^{-1}\ge\delta$ for items (i) and (ii),
$\|\bT^{-1}\|^{-1}\ge (2/\pi)\delta$ for item (iii), and $\|\bT^{-1}\|^{-1}\ge (1/\pi)\delta$ for item (iv).
The equality sign in items (i) and (ii)
are achieved by letting $X=\by\bu^{\HH}$ and $Y=\bx\bv^{\HH}$, where $\bu,\,\bv, \bx,\,\by$ are unit
singular vectors of $A$ and $B$, respectively such that
$A\bv=\sigma_{\min}(A)\,\bu$ and $B\bx=\sigma_{\max}(B)\,\by$, and hence
$$
\bT(X,Y)=[\sigma_{\min}(A)-\sigma_{\max}(B)](\by\bv^{\HH},\bx\bu^{\HH}).
$$
Also because $\sv(X)$, $\sv(Y)$, $\sv(\by\bv^{\HH})$, and $\sv(\bx\bu^{\HH})$ all consist of one nonzero singular value $1$ and some copies of zeros,
\begin{gather*}
\left\|\begin{bmatrix}
  0 & X \\
  Y & 0
\end{bmatrix}\right\|_{\UI}\equiv \left\|\begin{bmatrix}
  X & 0 \\
  0 & Y
\end{bmatrix}\right\|_{\UI}
  =\left\|\begin{bmatrix}
  \by\bv^{\HH} & 0 \\
  0 & \bx\bu^{\HH}
\end{bmatrix}\right\|_{\UI}, \quad \\
\|X\|_{\UI}=\|X\|_2=1,\,\,
\|Y\|_{\UI}=\|Y\|_2=1,\\
\|\by\bv^{\HH}\|_{\UI}=\|\by\bv^{\HH}\|_2=1,\,\,
\|\bx\bu^{\HH}\|_{\UI}=\|\bx\bu^{\HH}\|_2=1,
\end{gather*}
implying $\|\bT(X,Y)\|=\delta\|(X,Y)\|$ with the respective norm on $\scrB$ as specified in item~(i) or item~(ii).
\end{proof}

The next lemma is likely known. We state it here with a proof for self-containedness.

\begin{lemma}\label{lm:SVr}
Given $B\in\bbC^{m\times n}$ and an integer $1\le r<\min\{m,n\}$, we have
\begin{align*}
\sigma_r(B)&\ge\max\left\{\sigma_{\min}(B_{(:,1:r)}),\,\sigma_{\min}(B_{(1:r,:)})\right\}\ge\sigma_{\min}(B_{(1:r,1:r)}), \\
\sigma_{r+1}(B)&\le\min\left\{\sigma_{\max}(B_{(:,r+1:n)}),\,\sigma_{\max}(B_{(r+1:m,:)})\right\}.
\end{align*}
\end{lemma}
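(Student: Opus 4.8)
The plan is to deduce both chains from the Courant--Fischer (min--max) characterizations of the singular values of a rectangular $C\in\bbC^{p\times q}$: for $1\le k\le\min\{p,q\}$,
\[
\sigma_k(C)=\max_{\dim\mathcal{S}=k}\ \min_{x\in\mathcal{S},\,\|x\|_2=1}\|Cx\|_2
=\min_{\dim\mathcal{S}=q-k+1}\ \max_{x\in\mathcal{S},\,\|x\|_2=1}\|Cx\|_2,
\]
where $\mathcal{S}$ runs over subspaces of $\bbC^q$ of the indicated dimension; both identities follow in one line by passing to the SVD of $C$, and I would include that line for self-containedness. Since $r<\min\{m,n\}$ forces $r+1\le\min\{m,n\}$, every singular value appearing in the statement is genuinely defined, which I would note at the outset.

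For the lower bounds in the first chain, I would first treat $\sigma_r(B)\ge\sigma_{\min}(B_{(:,1:r)})$: the map $x\in\bbC^r\mapsto\binom{x}{0}\in\bbC^n$ is an isometry onto the $r$-dimensional coordinate subspace $\mathcal{R}_0$ with $B\binom{x}{0}=B_{(:,1:r)}x$, so $\sigma_{\min}(B_{(:,1:r)})=\min_{z\in\mathcal{R}_0,\,\|z\|_2=1}\|Bz\|_2$, which is at most the maximum of this quantity over all $r$-dimensional subspaces, i.e.\ at most $\sigma_r(B)$. Next, $B_{(1:r,:)}=P B$ with $P=[\,I_r,\ 0\,]$ a coordinate projection, and since $\|PBz\|_2\le\|Bz\|_2$ for every $z$, the max--min formula with $k=r$ gives $\sigma_r(B_{(1:r,:)})\le\sigma_r(B)$; as $B_{(1:r,:)}$ is $r\times n$, its smallest singular value is precisely $\sigma_r$, so this says $\sigma_{\min}(B_{(1:r,:)})\le\sigma_r(B)$. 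Finally $B_{(1:r,1:r)}x$ is a coordinate subvector of $B_{(:,1:r)}x$, hence $\|B_{(1:r,1:r)}x\|_2\le\|B_{(:,1:r)}x\|_2$ for all $x\in\bbC^r$ and therefore $\sigma_{\min}(B_{(1:r,1:r)})\le\sigma_{\min}(B_{(:,1:r)})$, which closes the first chain.

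For the upper bounds I would isolate one auxiliary fact: if $C=\begin{bmatrix}C_1\\ C_2\end{bmatrix}$ with $C_1$ having $p$ rows, $C$ having more than $p$ rows, and $C$ having at least $p+1$ columns, then $\sigma_{p+1}(C)\le\|C_2\|_2$. Proof of the fact: $\mathcal{N}:=\ker C_1$ has $\dim\mathcal{N}\ge q-p$ with $q$ the number of columns of $C$, and for $x\in\mathcal{N}$ one has $\|Cx\|_2^2=\|C_1x\|_2^2+\|C_2x\|_2^2=\|C_2x\|_2^2\le\|C_2\|_2^2\|x\|_2^2$, so feeding any $(q-p)$-dimensional subspace of $\mathcal{N}$ into the min--max formula for $\sigma_{p+1}(C)$ yields the bound. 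Applying it to $B=\begin{bmatrix}B_{(1:r,:)}\\ B_{(r+1:m,:)}\end{bmatrix}$ with $p=r$ gives $\sigma_{r+1}(B)\le\sigma_{\max}(B_{(r+1:m,:)})$, and applying it to $B^{\HH}=\begin{bmatrix}B_{(:,1:r)}^{\HH}\\ B_{(:,r+1:n)}^{\HH}\end{bmatrix}$ together with $\sigma_{r+1}(B)=\sigma_{r+1}(B^{\HH})$ gives $\sigma_{r+1}(B)\le\sigma_{\max}(B_{(:,r+1:n)})$; taking the smaller of the two bounds is the second chain.

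I do not expect a genuine obstacle---the argument is short---so the point to watch is bookkeeping: in each use of Courant--Fischer one must track which ambient space $\bbC^p$ or $\bbC^q$ the competing subspaces live in, and check that the index $k$ stays in $1\le k\le\min\{p,q\}$ for the block at hand, so that (for instance) the smallest singular value of an $r\times n$ or $m\times r$ block really is its $r$-th singular value. Alternatively one could simply invoke a general singular-value interlacing theorem for submatrices, but spelling out the few lines above keeps the lemma self-contained, as intended.
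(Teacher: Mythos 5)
Your proof is correct and takes essentially the same route as the paper: both rely on the Courant--Fischer min--max characterization (you state it directly for singular values via $\|Bx\|_2$, the paper equivalently for eigenvalues of $B^{\HH}B$ and $BB^{\HH}$) and then restrict to a conveniently chosen subspace. The only minor stylistic deviation is in the upper bound for $\sigma_{r+1}(B)$, where you plug in $\ker B_{(1:r,:)}$ whereas the paper uses the coordinate subspace of vectors vanishing in the first $r$ entries; both are valid test subspaces of the right dimension and yield the same inequality.
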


\begin{proof}
Partition $B$ as
$$
B=\kbordermatrix{ &\sss r &\sss n-r \\
              \sss r & B_{11} &  B_{12} \\
              \sss m-r &  B_{21} & B_{22}},
$$
and then we have
$$
B^{\HH}B=\begin{bmatrix}
       B_{11}^{\HH}B_{11}+B_{21}^{\HH}B_{21} & B_{11}^{\HH}B_{12}+B_{21}^{\HH}B_{22} \\
       B_{12}^{\HH}B_{11}+B_{22}^{\HH}B_{21} & B_{12}^{\HH}B_{12}+B_{22}^{\HH}B_{22}
     \end{bmatrix}\in\bbC^{n\times n}.
$$
It follows from Fischer's minimax principle for the symmetric eigenvalue problem
(see, e.g., \cite[eq. (1.3)]{lili:2014}, \cite[p.206]{parl:1998}, \cite[p.201]{stsu:1990})
that
\begin{align}
[\sigma_r(B)]^2=\lambda_r(B^{\HH}B)
   &=\max_{\dim\cX=r}\min_{\bx\in\cX\subset\bbC^n}\frac {\bx^{\HH}(B^{\HH}B)\bx}{\bx^{\HH}\bx} \nonumber \\
   &\ge\min_{\bz\in\bbC^r}\frac {\bz^{\HH}(B_{11}^{\HH}B_{11}+B_{21}^{\HH}B_{21})\bz}{\bz^{\HH}\bz} \nonumber\\
   &\ge\min_{\bz\in\bbC^r}\frac {\bz^{\HH}(B_{11}^{\HH}B_{11})\bz}{\bz^{\HH}\bz}
   =[\sigma_{\min}(B_{11})]^2, \label{eq:minHH-r}
\end{align}
where $\cX\subset\bbC^n$ denotes a subspace of $\bbC^n$, $\lambda_r(B^{\HH}B)$ is the $r$th largest eigenvalue of $B^{\HH}B$, and  the first inequality is due to limiting
the subspace to the one composed of vectors with their last $n-r$ entries being to $0$.
This proves $\sigma_r(B)\ge\sigma_{\min}(B_{(:,1:r)})\ge\sigma_{\min}(B_{(1:r,1:r)})$.
Next, using $[\sigma_r(B)]^2=\lambda_r(BB^{\HH})$, in the same way as in \eqref{eq:minHH-r}, we can get
$\sigma_r(B)\ge\sigma_{\min}(B_{(1:r,:)})\ge\sigma_{\min}(B_{(1:r,1:r)})$. This completes the proof of the inequalities
for $\sigma_r(B)$.

Analogously, again by Fischer's minimax principle, we have
\begin{align}
[\sigma_{r+1}(B)]^2=\lambda_{r+1}(B^{\HH}B)
   &=\min_{\dim\cX=n-r}\max_{\bx\in\cX\subset\bbC^n}\frac {\bx^{\HH}(B^{\HH}B)\bx}{\bx^{\HH}\bx} \nonumber\\
   &\le\max_{\bz\in\bbC^{n-r}}\frac {\bz^{\HH}(B_{12}^{\HH}B_{12}+B_{22}^{\HH}B_{22})\bz}{\bz^{\HH}\bz} \nonumber\\
   &=\lambda_{\max}(B_{12}^{\HH}B_{12}+B_{22}^{\HH}B_{22})
    =[\sigma_{\max}(B_{(:,r+1:n)})]^2, \label{eq:maxHH-r}
\end{align}
where $\lambda_{r+1}(B^{\HH}B)$ is the $(r+1)$st largest eigenvalue of $B^{\HH}B$, and the first inequality is due to limiting
the subspace to the one composed of vectors with their first $r$ entries being to $0$.
Next, using $[\sigma_{r+1}(B)]^2=\lambda_{r+1}(BB^{\HH})$, in the same way as in \eqref{eq:maxHH-r}, we can get
$\sigma_{r+1}(B)\le\sigma_{\max}(B_{(r+1:m,:)})$.
\end{proof}

\section{Main Result}\label{sec:main}
In order to achieve \eqref{eq:SVD4tG-almost:apx}, we need
some $\Omega$ and $\Gamma$ to satisfy
\begin{subequations}\label{eq:GtG:syl:apx}
\begin{align}
\Gamma (G_1+E_{11})-(G_2+E_{22})\Omega &= E_{21}-\Gamma\, E_{12}\Omega,  \label{eq:GtG:syl-1:apx}\\
\Omega (G_1+E_{11})^{\HH}-(G_2+E_{22})^{\HH}\Gamma &=E_{12}^{\HH}-\Omega E_{21}^{\HH} \Gamma, \label{eq:GtG:syl-2:apx}
\end{align}
\end{subequations}
obtained from setting off-diagonal blocks of $\check U^{\HH}\wtd G \check V$, partitioned accordingly, to $0$.

In what follows, we will use Lemma~\ref{lm:stew-lm} to prove the existence of a solution pair
$(\Gamma,\Omega)\in\bbC^{(m-r)\times r}\times\bbC^{(n-r)\times r}$ to \eqref{eq:GtG:syl:apx}
with an upper bound under certain conditions.

Keeping in mind that our goal is to create a variant
of \Cref{thm:stew:1973SVD} in a unitarily invariant norm and the spectral norm, and hopefully the variant for the special case of
the Frobenius norm is better than \Cref{thm:stew:1973SVD} in terms of both weaker conditions and stronger results.
In the setting of \Cref{lm:stew-lm}, we will use the Banach space
$$
\scrB:=\bbC^{(m-r)\times r}\times\bbC^{(n-r)\times r}
$$
endowed with one of the two norms: for $(\Gamma,\Omega)\in\scrB$,
\begin{subequations}\label{eq:scrB-norm}
\begin{align}
\|(\Gamma,\Omega)\|
    &:=\left\|\begin{bmatrix}
               0 & \Gamma \\
               \Omega & 0
             \end{bmatrix}\right\|_{\UI}\equiv\left\|\begin{bmatrix}
               \Gamma & 0\\
               0 & \Omega
             \end{bmatrix}\right\|_{\UI}, \label{eq:scrB-norm-1} \\
\|(\Gamma,\Omega)\|&:=\max\{\|\Gamma\|_{\UI},\|\Omega\|_{\UI}\}. \label{eq:scrB-norm-2}
\end{align}
\end{subequations}
The two endowed norms become one for the case $\|\cdot\|_{\UI}=\|\cdot\|_2$, the spectral norm, but otherwise are different.
The linear operator $\bT\,:\,\scrB\to\scrB$ is given by
\begin{equation}\label{eq:bT-map}
\bT(\Gamma,\Omega)=\Big(\Gamma (G_1+E_{11})-(G_2+E_{22})\Omega,
                         \Omega (G_1+E_{11})^{\HH}-(G_2+E_{22})^{\HH}\Gamma\Big),
\end{equation}
and the continuous function $\bphi\,:\,\scrB\to\scrB$ is
\begin{equation}\label{eq:bphi-map}
\bphi((\Gamma,\Omega))=\left(\Gamma\, E_{12}\Omega,\Omega E_{21}^{\HH} \Gamma\right).
\end{equation}
It is not hard to verify that $\|(\Gamma,\Omega)\|$ defined in \eqref{eq:scrB-norm} is indeed a norm on $\scrB$.

Compactly, the two equations in \eqref{eq:GtG:syl:apx} can be merged into one to take the form
\begin{equation}\label{eq:GtG:syl':apx}
\bT(\Gamma,\Omega)=(E_{21},E_{12}^{\HH})-\bphi((\Gamma,\Omega)).
\end{equation}

It remains to verify the conditions of Lemma~\ref{lm:stew-lm} for $\bT$ and $\bphi$ we just defined.
This is done in the next two lemmas.
Let
\begin{subequations}\label{eq:quantities-apx}
\begin{align}
\delta&=\min_{\mu\in\sv(G_1),\,\nu\in\sv_{\ext}(G_2)}\,|\mu-\nu|, \label{eq:quantities-apx-2} \\
\munderbar\delta&=\delta-\|E_{11}\|_2-\|E_{22}\|_2,  \label{eq:quantities-apx-3} \\
\varepsilon&=\max\{\|E_{12}\|_2,\|E_{21}\|_2\}. \label{eq:quantities-apx-4}
\end{align}
\end{subequations}

\begin{lemma}\label{lm:veri-stew-bT}
Suppose $\munderbar\delta>0$. Dependent of different cases, we have for any $(\Gamma,\Omega)\in\scrB$
\begin{equation}\label{eq:bT-bd}
\|\bT(\Gamma,\Omega)\|\ge\frac {\munderbar\delta}c\,\|(\Gamma,\Omega)\|,
\end{equation}
which implies $\|\bT^{-1}\|^{-1}\ge\munderbar\delta/c$, where $\munderbar\delta$ is as in \eqref{eq:quantities-apx-3}, and
\begin{enumerate}[{\rm (i)}]
  \item with the endowed norm in \eqref{eq:scrB-norm-1},
        \begin{equation}\label{eq:constant-c}
        c=
        \begin{cases}
          1, \quad&\mbox{if $\sigma_{\min}(G_1)>\sigma_{\max}(G_2)$ or $\|\cdot\|_{\UI}=\|\cdot\|_{\F}$}, \\
          \pi/2, \quad&\mbox{otherwise};
        \end{cases}
        \end{equation}
  \item with the endowed norm in \eqref{eq:scrB-norm-2},
        \begin{equation}\label{eq:constant-c'}
        c=
        \begin{cases}
          1, \quad&\mbox{if $\sigma_{\min}(G_1)>\sigma_{\max}(G_2)$}, \\
          \pi, \quad&\mbox{otherwise}.
        \end{cases}
        \end{equation}
\end{enumerate}
\end{lemma}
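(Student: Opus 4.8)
The plan is to recognize that the operator $\bT$ in \eqref{eq:bT-map} is precisely the coupled-Sylvester operator treated in \Cref{cor:BdCpldSylv}, with the substitution $A=\wtd G_1:=G_1+E_{11}$, $B=\wtd G_2:=G_2+E_{22}$ (and $X=\Gamma$, $Y=\Omega$), and that the two endowed norms \eqref{eq:scrB-norm-1} and \eqref{eq:scrB-norm-2} used here are the same as \eqref{eq:norm4scrB-1} and \eqref{eq:norm4scrB-3} there. Since, as noted in the proof of \Cref{cor:BdCpldSylv}, $\|\bT^{-1}\|^{-1}$ is exactly the largest $\gamma$ for which $\|\bT(\Gamma,\Omega)\|\ge\gamma\,\|(\Gamma,\Omega)\|$ holds for all $(\Gamma,\Omega)$, the asserted bound \eqref{eq:bT-bd} is equivalent to $\|\bT^{-1}\|^{-1}\ge\munderbar\delta/c$. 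Thus it suffices to invoke the appropriate item of \Cref{cor:BdCpldSylv} with $A=\wtd G_1$, $B=\wtd G_2$, once the relevant separation quantity for $\wtd G_1,\wtd G_2$ has been shown to dominate $\munderbar\delta$.

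The key preliminary step I would carry out first is the separation estimate. Let $\wtd\delta:=\min_{\mu\in\sv(\wtd G_1),\,\nu\in\sv_{\ext}(\wtd G_2)}|\mu-\nu|$; I claim $\wtd\delta\ge\munderbar\delta$. By Weyl's perturbation theorem for singular values, $|\sigma_i(\wtd G_1)-\sigma_i(G_1)|\le\|E_{11}\|_2$ and $|\sigma_i(\wtd G_2)-\sigma_i(G_2)|\le\|E_{22}\|_2$ for every $i$, while forming $\sv_{\ext}$ appends the same $|m-n|$ zeros to $\wtd G_2$ and to $G_2$; hence any $\tilde\mu\in\sv(\wtd G_1)$ and $\tilde\nu\in\sv_{\ext}(\wtd G_2)$ are within $\|E_{11}\|_2$ and $\|E_{22}\|_2$ of appropriate $\mu\in\sv(G_1)$ and $\nu\in\sv_{\ext}(G_2)$, so $|\tilde\mu-\tilde\nu|\ge|\mu-\nu|-\|E_{11}\|_2-\|E_{22}\|_2\ge\delta-\|E_{11}\|_2-\|E_{22}\|_2=\munderbar\delta$; minimizing over $\tilde\mu,\tilde\nu$ gives $\wtd\delta\ge\munderbar\delta>0$, so in particular $\sv(\wtd G_1)\cap\sv_{\ext}(\wtd G_2)=\emptyset$ and $\bT$ is invertible.

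The four cases then follow by quoting \Cref{cor:BdCpldSylv} with its $\delta$ read as $\wtd\delta$: for $\|\cdot\|_{\UI}=\|\cdot\|_{\F}$ under \eqref{eq:scrB-norm-1}, item (i) gives $\|\bT^{-1}\|^{-1}=\wtd\delta\ge\munderbar\delta$, i.e.\ $c=1$; for general $\|\cdot\|_{\UI}$ under \eqref{eq:scrB-norm-1}, item (iii) gives $\|\bT^{-1}\|^{-1}\ge(2/\pi)\wtd\delta\ge(2/\pi)\munderbar\delta$, i.e.\ $c=\pi/2$; and under \eqref{eq:scrB-norm-2}, item (iv) gives $\|\bT^{-1}\|^{-1}\ge(1/\pi)\wtd\delta\ge(1/\pi)\munderbar\delta$, i.e.\ $c=\pi$. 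To obtain the sharper $c=1$ when $\sigma_{\min}(G_1)>\sigma_{\max}(G_2)$ (for either endowed norm) I would note that this hypothesis forces $\delta=\sigma_{\min}(G_1)-\sigma_{\max}(G_2)$, because all of $\sv_{\ext}(G_2)$ lies in $[0,\sigma_{\max}(G_2)]$ and all of $\sv(G_1)$ in $[\sigma_{\min}(G_1),\sigma_{\max}(G_1)]$, two separated intervals; then Weyl gives $\sigma_{\min}(\wtd G_1)-\sigma_{\max}(\wtd G_2)\ge\delta-\|E_{11}\|_2-\|E_{22}\|_2=\munderbar\delta>0$, and item (ii) applies to give $\|\bT^{-1}\|^{-1}=\sigma_{\min}(\wtd G_1)-\sigma_{\max}(\wtd G_2)\ge\munderbar\delta$, i.e.\ $c=1$.

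I expect the only step with real content to be the separation estimate $\wtd\delta\ge\munderbar\delta$ (and its variant $\sigma_{\min}(\wtd G_1)-\sigma_{\max}(\wtd G_2)\ge\munderbar\delta$), which is precisely where the singular-value perturbation bound enters; the rest is bookkeeping that matches the norm and the sign condition on $\sigma_{\min}(G_1)-\sigma_{\max}(G_2)$ against the four items of \Cref{cor:BdCpldSylv}. The only mild technicality is the extended singular-value sets together with the possibly non-square block $G_2$; this is handled exactly as in the derivation of \Cref{lm:BdCpldSylv}, by padding $G_2$, $\Omega$, and one of the right-hand sides with zero blocks, and it does not affect the estimate since padding merely appends zeros to both $\sv_{\ext}(G_2)$ and $\sv_{\ext}(\wtd G_2)$.
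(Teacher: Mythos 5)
Your argument is correct and matches the paper's proof in all essentials: you identify $\bT$ with the coupled-Sylvester operator for $A=G_1+E_{11}$, $B=G_2+E_{22}$, use singular-value perturbation (Weyl/Mirsky) to show the separation quantity for the perturbed blocks dominates $\munderbar\delta$ (and, in the gap case, that $\sigma_{\min}(G_1+E_{11})-\sigma_{\max}(G_2+E_{22})\ge\munderbar\delta$), and then read off the four values of $c$ from the coupled-Sylvester bounds. The only cosmetic difference is that you cite \Cref{cor:BdCpldSylv} where the paper invokes \Cref{lm:BdCpldSylv} directly, but these are the same content stated in operator-norm versus inequality form.
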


\begin{proof}
Notice that $\bT(\Gamma,\Omega)=(S,T)$ consists of a set of two coupled Sylvester equations in \eqref{eq:CpldSyvl}
with
$$
A=G_1+E_{11},\,\,
B=G_2+E_{22},
$$
and $\Gamma$ and $\Omega$ correspond to $X$ and $Y$ there, respectively. We claim that
\begin{equation}\label{eq:veri-stew-bT:pf-1}
\munderbar\delta\le\min_{\mu\in\sv(G_1+E_{11}),\,\nu\in\sv_{\ext}(G_2+E_{22})}\,|\mu-\nu|=|\mu'-\nu'|.
\end{equation}
where $\mu'\in\sv(G_1+E_{11}),\,\nu'\in\sv_{\ext}(G_2+E_{22})$ achieve the minimum.
By Mirsky's theorem \cite[p.204]{stsu:1990}, there are $\mu\in\sv(G_1),\,\nu\in\sv_{\ext}(G_2)$ such that
$|\mu'-\mu|\le \|E_{11}\|_2$ and $|\nu'-\nu|\le \|E_{22}\|_2$. Hence
\begin{align*}
|\mu'-\nu'|
   &=|\mu-\nu+(\mu'-\mu)-(\nu'-\nu)| \\
   &\ge|\mu-\nu|-|\mu'-\mu|-|\nu'-\nu| \\
   &\ge\delta-\|E_{11}\|_2-\|E_{22}\|_2=\munderbar\delta,
\end{align*}
yielding \eqref{eq:veri-stew-bT:pf-1}.

If $\sigma_{\min}(G_1)>\sigma_{\max}(G_2)$, then
$\delta=\sigma_{\min}(G_1)-\sigma_{\max}(G_2)>0$, and hence
\begin{align*}
0<\munderbar\delta&=\sigma_{\min}(G_1)-\sigma_{\max}(G_2)-\|E_{11}\|_2-\|E_{22}\|_2 \\
    &=[\sigma_{\min}(G_1)-\|E_{11}\|_2]-[\sigma_{\max}(G_2)+\|E_{22}\|_2].
\end{align*}
Keep in mind that
$$
\sigma_{\min}(G_1+E_{11})\ge\sigma_{\min}(G_1)-\|E_{11}\|_2, \quad
\sigma_{\max}(G_2+E_{22})\le\sigma_{\max}(G_2)+\|E_{22}\|_2
$$
by Mirsky's theorem \cite[p.204]{stsu:1990}, and hence
$$
\sv(A)\subset [\sigma_{\min}(G_1)-\|E_{11}\|_2,\infty), \quad
\sv_{\ext}(B)\subset [0,\sigma_{\max}(G_2)+\|E_{22}\|_2].
$$
This lemma is a consequence of \Cref{lm:BdCpldSylv}.
\end{proof}

\begin{remark}\label{rk:veri-stew-bT}
There are a few comments in order, regarding the assumptions in \Cref{lm:veri-stew-bT} that ensure \eqref{eq:bT-bd}.
\begin{enumerate}[1)]
  \item Always $\sv(G_1)\cap\sv_{\ext}(G_2)=\sv(G_1+E_{11})\cap\sv_{\ext}(G_2+E_{22})=\emptyset$, guaranteed by $\munderbar\delta>0$;
  \item If $m\ne n$, then $0$ is an element of both $\sv_{\ext}(G_2)$ and $\sv_{\ext}(G_2+E_{22})$ and hence
        $\sigma_{\min}(G_1)>0$ and $\sigma_{\min}(G_1+E_{11})>0$;
  \item Two different ways of separation between $\sv(G_1)$ and $\sv_{\ext}(G_2)$ are assumed:
        \begin{enumerate}[(i)]
          \item simply         $\sv(G_1)\cap\sv_{\ext}(G_2)=\emptyset$;
          \item $\sigma_{\min}(G_1)>\sigma_{\max}(G_2)$, i..e,
                the interval $[\sigma_{\max}(G_2),\sigma_{\min}(G_1)]$ separates $\sv(G_1)$ from $\sv_{\ext}(G_2)$.
        \end{enumerate}
        Assumption (i) of separation is weaker than Assumption (ii) of separation.
        Each implies the same way of separation between $\sv(G_1+E_{11})$ and $\sv_{\ext}(G_2+E_{22})$ by $\munderbar\delta>0$;
  \item The endowed norm \eqref{eq:scrB-norm-1} works with both assumptions of separation on the singular values.
        Correspondingly, $c=1$ always for the Frobenius norm, and $c=1$ under Assumption (ii) of separation
        above and $\pi/2$ otherwise;
  \item The endowed norm \eqref{eq:scrB-norm-2} works with both assumptions of separation, too.
        Correspondingly, $c=1$ always for the Frobenius norm, and $c=1$ under Assumption (ii) of separation
        and $\pi$ otherwise.
\end{enumerate}
Finally, because of \eqref{eq:UI=2:equiv}, items (i) and (ii) of \Cref{lm:veri-stew-bT}
        overlap at the case $\|\cdot\|_{\UI}=\|\cdot\|_2$ and $\sigma_{\min}(G_1)>\sigma_{\max}(G_2)$.
\end{remark}

In what follows, our representation will assume that one of the endowed norm $\|(\cdot,\cdot)\|$
in \eqref{eq:scrB-norm} is selected and fixed, and,  along with it, the part of \Cref{lm:veri-stew-bT}, unless explicitly stated otherwise.

\begin{lemma}\label{lm:veri-stew-bphi}
For the continuous function $\bphi$  defined in \eqref{eq:bphi-map},
we have
\begin{align*}
  \mbox{\rm (i)}  &\quad \|\bphi((\Gamma,\Omega))\|\le {\varepsilon}\,\|(\Gamma,\Omega)\|^2, \\
  \mbox{\rm (ii)} &\quad \|\bphi((\Gamma,\Omega))-\bphi((\what\Gamma,\what\Omega))\|
                    \le 2\varepsilon\max\{\|(\Gamma,\Omega)\|,\|(\what\Gamma,\what\Omega)\|\}\|(\Gamma-\what\Gamma,\Omega-\what\Omega)\|,
\end{align*}
where $\varepsilon=\max\{\|E_{12}\|_2,\|E_{21}\|_2\}$ is as in \eqref{eq:quantities-apx-4}.
\end{lemma}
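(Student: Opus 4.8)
The plan is to isolate the single genuine estimate that is needed --- a bilinear bound --- and then obtain both (i) and (ii) from it almost for free. Introduce the bilinear map $\Psi\colon\scrB\times\scrB\to\scrB$ defined by
\[
\Psi\big((\Gamma_1,\Omega_1),(\Gamma_2,\Omega_2)\big)=\big(\Gamma_1E_{12}\Omega_2,\ \Omega_1E_{21}^{\HH}\Gamma_2\big),
\]
so that $\bphi(z)=\Psi(z,z)$ for $z=(\Gamma,\Omega)\in\scrB$. The claim is that, whichever of the two endowed norms in \eqref{eq:scrB-norm} is in force,
\[
\|\Psi(z_1,z_2)\|\le\varepsilon\,\|z_1\|\,\|z_2\|\qquad\text{for all }z_1,z_2\in\scrB ,
\]
with $\varepsilon=\max\{\|E_{12}\|_2,\|E_{21}\|_2\}$ as in \eqref{eq:quantities-apx-4}. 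Granting this, (i) is immediate because $\bphi(z)=\Psi(z,z)$; and for (ii) one expands, using bilinearity (the ``product rule''), $\bphi(z)-\bphi(\what z)=\Psi(z,z-\what z)+\Psi(z-\what z,\what z)$, and then applies the triangle inequality and the bilinear bound to get $\|\bphi(z)-\bphi(\what z)\|\le\varepsilon(\|z\|+\|\what z\|)\,\|z-\what z\|\le 2\varepsilon\max\{\|z\|,\|\what z\|\}\,\|z-\what z\|$. Continuity of $\bphi$ is clear since its two components are homogeneous quadratics in the entries of $\Gamma$ and $\Omega$.

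So the whole proof reduces to the bilinear bound, and the key device there is to route the perturbation blocks through the spectral norm only. The identity I would build everything on is
\[
\begin{bmatrix}\Gamma_1E_{12}\Omega_2 & 0\\ 0 & \Omega_1E_{21}^{\HH}\Gamma_2\end{bmatrix}
=\begin{bmatrix}0 & \Gamma_1\\ \Omega_1 & 0\end{bmatrix}
 \begin{bmatrix}E_{21}^{\HH} & 0\\ 0 & E_{12}\end{bmatrix}
 \begin{bmatrix}0 & \Gamma_2\\ \Omega_2 & 0\end{bmatrix}.
\]
For the norm \eqref{eq:scrB-norm-1}, the matrix on the left of this identity has the same unitarily invariant norm as the ``anti-diagonal'' arrangement whose norm is exactly $\|\Psi(z_1,z_2)\|$ (same singular values). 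Applying $\|XYZ\|_{\UI}=\|X(YZ)\|_{\UI}\le\|X\|_{\UI}\,\|Y\|_2\,\|Z\|_2$ to the right-hand side, using that the middle factor has spectral norm $\max\{\|E_{21}\|_2,\|E_{12}\|_2\}=\varepsilon$, that the first factor has $\|\cdot\|_{\UI}$ equal to $\|z_1\|$ by the very definition of \eqref{eq:scrB-norm-1}, and that the last factor has spectral norm at most its $\|\cdot\|_{\UI}$, i.e.\ at most $\|z_2\|$, yields $\|\Psi(z_1,z_2)\|\le\|z_1\|\,\varepsilon\,\|z_2\|$. For the norm \eqref{eq:scrB-norm-2} one bounds each component separately: $\|\Gamma_1E_{12}\Omega_2\|_{\UI}\le\|\Gamma_1\|_{\UI}\,\|E_{12}\|_2\,\|\Omega_2\|_2\le\varepsilon\,\|z_1\|\,\|z_2\|$, and symmetrically $\|\Omega_1E_{21}^{\HH}\Gamma_2\|_{\UI}\le\varepsilon\,\|z_1\|\,\|z_2\|$, so the maximum of the two is at most $\varepsilon\,\|z_1\|\,\|z_2\|$ as well. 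The only norm facts used are $\|XYZ\|_{\UI}\le\|X\|_{\UI}\,\|Y\|_2\,\|Z\|_2$ (with the last slot measured in the spectral norm) and $\|\cdot\|_2\le\|\cdot\|_{\UI}$, both consequences of the properties of unitarily invariant norms recalled earlier in the paper.

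The one point that needs care --- and the one I would flag as the main obstacle, though it is bookkeeping rather than a conceptual hurdle --- is precisely the insistence on the spectral norm for the $E_{ij}$ blocks, so that the emerging constant is $\varepsilon=\max\{\|E_{12}\|_2,\|E_{21}\|_2\}$ and nothing Frobenius-like or dimension-dependent creeps in. This is what dictates the grouping in the triple-product inequality (the $E$-block in the middle, one $(\Gamma,\Omega)$-factor measured in the given unitarily invariant norm and the other in the spectral norm), and, for the norm in \eqref{eq:scrB-norm-1}, the use of the fact that the block arrangements $\bigl[\begin{smallmatrix}0 & \Gamma\\ \Omega & 0\end{smallmatrix}\bigr]$ and $\bigl[\begin{smallmatrix}\Gamma & 0\\ 0 & \Omega\end{smallmatrix}\bigr]$ share the same singular values, so passing between them costs nothing. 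Once the grouping is fixed, the remaining manipulations are routine.
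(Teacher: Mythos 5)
Your proposal is correct and follows essentially the same route as the paper: factor the relevant block matrix as a triple product with the $E_{ij}$ blocks isolated in the middle, bound that product using $\|XYZ\|_{\UI}\le\|X\|_{\UI}\|Y\|_2\|Z\|_2$ together with $\|\cdot\|_2\le\|\cdot\|_{\UI}$, and obtain (ii) from (i) by the standard bilinear ``product rule'' split of $\bphi(z)-\bphi(\what z)$. Packaging the argument through the explicit bilinear map $\Psi$ and using the anti-diagonal arrangement directly are tidy cosmetic variations, but the underlying estimates are the same as those in the paper's proof.
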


\begin{proof}
With \eqref{eq:scrB-norm-1}, we have
\begin{align}
\|\bphi((\Gamma,\Omega))\|
  &=\left\|\begin{bmatrix}
             \Gamma\,E_{12}\Omega & 0 \\
             0 & \Omega E_{21}^{\HH} \Gamma
           \end{bmatrix}\right\|_{\UI}
   =\left\|\begin{bmatrix}
             \Gamma & 0 \\
             0 & \Omega
           \end{bmatrix}
   \begin{bmatrix}
             E_{12} & 0 \\
             0 &  E_{21}^{\HH}
           \end{bmatrix}\begin{bmatrix}
             \Omega & 0 \\
             0 &  \Gamma
           \end{bmatrix}\right\|_{\UI} \nonumber \\
  &\le\left\|\begin{bmatrix}
             \Gamma & 0 \\
             0 & \Omega
           \end{bmatrix}\right\|_{\UI}
   \left\|\begin{bmatrix}
             E_{12} & 0 \\
             0 &  E_{21}^{\HH}
           \end{bmatrix}\right\|_2\left\|\begin{bmatrix}
             \Omega & 0 \\
             0 &  \Gamma
           \end{bmatrix}\right\|_{\UI} \nonumber \\
  &=\max\{\|E_{12}\|_2,\|E_{21}\|_2\}\times\|(\Gamma,\Omega)\|^2; \label{eq:veri-stew-bphi:pf-1}
\end{align}
with \eqref{eq:scrB-norm-2}, we have
\begin{align}
\|\bphi((\Gamma,\Omega))\|
  &=\max\left\{\|\Gamma\,E_{12}\Omega\|_{\UI},\|\Omega E_{21}^{\HH} \Gamma\|_{\UI}\right\} \nonumber\\
  &\le\max\{\|E_{12}\|_2,\|E_{21}\|_2\}\times\|\Gamma\|_{\UI}\|\Omega\|_{\UI} \nonumber\\
  &\le\max\{\|E_{12}\|_2,\|E_{21}\|_2\}\times\|(\Gamma,\Omega)\|^2. \label{eq:veri-stew-bphi:pf-2}
\end{align}
This proves the inequality in item (i). For item (ii), we note
$$
\bphi((\Gamma,\Omega))-\bphi((\what\Gamma,\what\Omega))
  =\left(\Gamma\,E_{12}\Omega-\what\Gamma\,E_{12}\what\Omega,\Omega E_{21}^{\HH} \Gamma-\what\Omega E_{21}^{\HH}\what \Gamma\right).
$$
For each of the two components, we have
\begin{subequations}\label{eq:veri-stew-bphi:pf-3}
\begin{align}
\Gamma\,E_{12}\Omega-\what\Gamma\,E_{12}\what\Omega
  &=(\Gamma-\what\Gamma)\,E_{12}\Omega+\what\Gamma\,E_{12}(\Omega-\what\Omega), \label{eq:veri-stew-bphi:pf-3a} \\
\Omega E_{21}^{\HH} \Gamma-\what\Omega E_{21}^{\HH}\what \Gamma
  &=(\Omega-\what\Omega)\, E_{21}^{\HH} \Gamma+\what\Omega E_{21}^{\HH}(\Gamma-\what \Gamma). \label{eq:veri-stew-bphi:pf-3b}
\end{align}
\end{subequations}
With \eqref{eq:scrB-norm-1}, we get, similar to the derivation in \eqref{eq:veri-stew-bphi:pf-1},
\begin{align*}
\|\bphi((\Gamma,\Omega))&-\bphi((\what\Gamma,\what\Omega))\| \\
  &=\left\|\begin{bmatrix}
             (\Gamma-\what\Gamma)\,E_{12}\Omega+\what\Gamma\,E_{12}(\Omega-\what\Omega) & 0 \\
             0 & (\Omega-\what\Omega)\, E_{21}^{\HH} \Gamma+\what\Omega E_{21}^{\HH}(\Gamma-\what \Gamma)
           \end{bmatrix}\right\|_{\UI} \\
  &\le\left\|\begin{bmatrix}
             (\Gamma-\what\Gamma)\,E_{12}\Omega & 0 \\
             0 & (\Omega-\what\Omega)\, E_{21}^{\HH} \Gamma
           \end{bmatrix}\right\|_{\UI}
       +\left\|\begin{bmatrix}
             \what\Gamma\,E_{12}(\Omega-\what\Omega) & 0 \\
             0 & \what\Omega E_{21}^{\HH}(\Gamma-\what \Gamma)
           \end{bmatrix}\right\|_{\UI} \\
  &\le\varepsilon\,\|(\Gamma-\what\Gamma,\Omega-\what\Omega)\|
      \big(\|(\Gamma,\Omega)\|+\|(\what\Gamma,\what\Omega)\|\big) \\
  &\le 2\varepsilon\max\{\|(\Gamma,\Omega)\|,\|(\what\Gamma,\what\Omega)\|\}\|(\Gamma-\what\Gamma,\Omega-\what\Omega)\|;
\end{align*}
with \eqref{eq:scrB-norm-2}, we get, similar to the derivation in \eqref{eq:veri-stew-bphi:pf-2},
\begin{align*}
&\max\{\|\Gamma\,E_{12}\Omega-\what\Gamma\,E_{12}\what\Omega\|_{\UI},
       \|\Omega E_{21}^{\HH} \Gamma-\what\Omega E_{21}^{\HH}\what \Gamma\|_{\UI}\} \\
  &\qquad\le\varepsilon\max\{\|\Gamma-\what\Gamma\|_{\UI}\|\Omega\|_{\UI}+\|\what\Gamma\|_{\UI}\|\Omega-\what\Omega\|_{\UI},
         \|\Omega-\what\Omega\|_{\UI}\|\Gamma\|_{\UI}+\|\what\Omega\|_{\UI} \|\Gamma-\what\Gamma\|_{\UI}\} \\
  &\qquad\le\varepsilon\max\{\|\Gamma-\what\Gamma\|_{\UI},\|\Omega-\what\Omega\|_{\UI}\}\cdot
                       \max\{\|\Omega\|_{\UI}+\|\what\Gamma\|_{\UI},
                                \|\Gamma\|_{\UI}+\|\what\Omega\|_{\UI}\} \\
  &\qquad\le\varepsilon\max\{\|\Gamma-\what\Gamma\|_{\UI},\|\Omega-\what\Omega\|_{\UI}\}\cdot
                       2\max\{\|(\Gamma,\Omega)\|,\|(\what\Gamma,\what\Omega)\|\} \\
  &\qquad= 2\varepsilon\,\max\{\|(\Gamma,\Omega)\|,\|(\what\Gamma,\what\Omega)\|\}\|(\Gamma-\what\Gamma,\Omega-\what\Omega)\|,
\end{align*}
completing the proof of item (ii).
\end{proof}

Next we apply Lemma~\ref{lm:stew-lm} to ensure a particular solution $(\Gamma,\Omega)\in\scrB$ to
\eqref{eq:GtG:syl':apx}.

\begin{lemma}\label{lm:eq:GtG:syl:apx}
Let $\delta$, $\munderbar\delta$,  and $\varepsilon$ be defined as in \eqref{eq:quantities-apx}, and suppose
the conditions in \Cref{lm:veri-stew-bT} that ensure \eqref{eq:bT-bd} with constant $c$ as
specified there.
If
\begin{equation}\label{eq:tG:small-pert}
\munderbar\delta>0
\quad\mbox{and}\quad
\kappa_2:=\frac {c^2\varepsilon}{\munderbar\delta^2}\,\|(E_{21},E_{12}^{\HH})\|<\frac 14,
\end{equation}
then \eqref{eq:GtG:syl':apx} has a solution $(\Gamma,\Omega)$ that satisfies
\begin{equation}\label{eq:bd4SVDtG}
\|(\Gamma,\Omega)\|
 \le\frac {1+\sqrt{1-4\kappa_2}}{1-2\kappa_2+\sqrt{1-4\kappa_2}}
         \frac {c\|(E_{21},E_{12}^{\HH})\|}{\munderbar\delta}<2\,\frac {c\|(E_{21},E_{12}^{\HH})\|}{\munderbar\delta}.
\end{equation}
Here $\|(E_{21},E_{12}^{\HH})\|$ and $\|(\Gamma,\Omega)\|$ are understood as the same one of the endowed norms in \eqref{eq:scrB-norm} under consideration.
\end{lemma}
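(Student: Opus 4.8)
The plan is to verify the hypotheses of \Cref{lm:stew-lm} for the specific operator $\bT$ in \eqref{eq:bT-map}, the function $\bphi$ in \eqref{eq:bphi-map}, and the right-hand side $\bg=(E_{21},E_{12}^{\HH})$, all on the Banach space $\scrB=\bbC^{(m-r)\times r}\times\bbC^{(n-r)\times r}$ equipped with whichever endowed norm from \eqref{eq:scrB-norm} is in force, and then read off the conclusion. The three inputs to \Cref{lm:stew-lm} are: (a) a lower bound on $\|\bT^{-1}\|^{-1}$; (b) the two estimates on $\bphi$; and (c) the smallness condition $\kappa_2<1/4$. For (a), \Cref{lm:veri-stew-bT} (under the standing assumptions we are invoking) gives exactly $\|\bT(\Gamma,\Omega)\|\ge(\munderbar\delta/c)\|(\Gamma,\Omega)\|$, hence $\bT$ is invertible with $\|\bT^{-1}\|^{-1}\ge\munderbar\delta/c$; so in \Cref{lm:stew-lm} we may take $\hat\delta=\munderbar\delta/c>0$. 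For (b), \Cref{lm:veri-stew-bphi} establishes precisely conditions (i) and (ii) of \Cref{lm:stew-lm} with $\hat\varepsilon=\varepsilon=\max\{\|E_{12}\|_2,\|E_{21}\|_2\}$. It then only remains to translate the scalar bookkeeping.

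Concretely, I would substitute $\hat\varepsilon=\varepsilon$, $\hat\delta=\munderbar\delta/c$, and $\|\bg\|=\|(E_{21},E_{12}^{\HH})\|$ into the quantity $\kappa_2:=(\hat\varepsilon/\hat\delta^2)\|\bg\|$ of \Cref{lm:stew-lm}, obtaining
\[
\kappa_2=\frac{\varepsilon}{(\munderbar\delta/c)^2}\,\|(E_{21},E_{12}^{\HH})\|=\frac{c^2\varepsilon}{\munderbar\delta^2}\,\|(E_{21},E_{12}^{\HH})\|,
\]
which matches the definition of $\kappa_2$ in \eqref{eq:tG:small-pert}; thus the hypothesis $\kappa_2<1/4$ of \Cref{lm:stew-lm} is exactly the second condition in \eqref{eq:tG:small-pert}, and the first condition $\munderbar\delta>0$ is what makes $\hat\delta>0$ legitimate. \Cref{lm:stew-lm} then produces a solution $(\Gamma,\Omega)\in\scrB$ of $\bT(\Gamma,\Omega)=\bg-\bphi((\Gamma,\Omega))$, i.e.\ of \eqref{eq:GtG:syl':apx}, with
\[
\|(\Gamma,\Omega)\|\le\frac{1+\sqrt{1-4\kappa_2}}{1-2\kappa_2+\sqrt{1-4\kappa_2}}\,\frac{\|\bg\|}{\hat\delta}<2\,\frac{\|\bg\|}{\hat\delta},
\]
and since $\|\bg\|/\hat\delta=c\,\|(E_{21},E_{12}^{\HH})\|/\munderbar\delta$ this is precisely \eqref{eq:bd4SVDtG}. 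The only genuine bookkeeping points to be careful about are: that the same endowed norm is used consistently for $\bg$, for $(\Gamma,\Omega)$, and in computing the operator norm $\|\bT^{-1}\|$ (the statement already flags this); that $\bT$ and $\bphi$ map $\scrB$ into itself with the block sizes matching (immediate from \eqref{eq:bT-map}--\eqref{eq:bphi-map}); and that the existence assertion of \Cref{lm:stew-lm} requires $\bphi$ continuous, which is clear since it is bilinear in $(\Gamma,\Omega)$ with fixed coefficient matrices.

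There is no real obstacle here: this lemma is a packaging step whose content is entirely carried by \Cref{lm:stew-lm}, \Cref{lm:veri-stew-bT}, and \Cref{lm:veri-stew-bphi}, all already proved. If anything, the mild subtlety is making sure the constant $c$ — which depends on which endowed norm is chosen and on whether the "interval separation" hypothesis $\sigma_{\min}(G_1)>\sigma_{\max}(G_2)$ holds (see \Cref{rk:veri-stew-bT}) — is the one coming from the invoked branch of \Cref{lm:veri-stew-bT}, and that it enters $\hat\delta$ as $\munderbar\delta/c$ and hence $\kappa_2$ with a factor $c^2$; this is exactly what the displayed formulas above track. Thus the proof is a three-line verification of the hypotheses of \Cref{lm:stew-lm} followed by rewriting its conclusion in the present notation.
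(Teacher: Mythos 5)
Your proposal is correct and follows the paper's proof exactly: the paper likewise invokes Lemma~\ref{lm:stew-lm} with $\hat\varepsilon=\varepsilon$, $\hat\delta=\munderbar\delta/c$, and $\|\bg\|=\|(E_{21},E_{12}^{\HH})\|$, with the hypotheses supplied by Lemmas~\ref{lm:veri-stew-bT} and~\ref{lm:veri-stew-bphi}. Your additional bookkeeping of how $c$ enters $\hat\delta$ and hence $\kappa_2$ with a factor $c^2$ is just an expanded version of what the paper states in one sentence.
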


\begin{proof}
In light of \Cref{lm:veri-stew-bT,lm:veri-stew-bphi}, we find the conclusion is a straightforward consequence of Lemma~\ref{lm:stew-lm}
with $\hat\varepsilon=\varepsilon$, $\hat\delta=\munderbar\delta/c$, and $\|\bg\|=\|(E_{21},E_{12}^{\HH})\|$.
\end{proof}

Finally, we state the main result of this section.

\begin{theorem}\label{thm:SVD-almost}
Given $G,\,\wtd G\in\bbC^{m\times n}$, let $G$ be decomposed as in \eqref{eq:G:SVD:apx}, and partition
$U^{\HH}\wtd GV$ according to \eqref{eq:tG:apx-1}. Let  $\delta$,
$\munderbar\delta$,  and $\varepsilon$ be defined as in \eqref{eq:quantities-apx}. If \eqref{eq:tG:small-pert} is satisfied, then the following statements hold:
\begin{enumerate}[{\rm (a)}]
  \item there exists a solution
        $(\Gamma,\Omega)\in\bbC^{(m-r)\times r}\times\bbC^{(n-r)\times r}$ to \eqref{eq:GtG:syl:apx},
        satisfying \eqref{eq:bd4SVDtG};
  \item $\wtd G$ admits the decomposition \eqref{eq:SVD4tG-almost:apx},
        and the singular values of $\wtd G$ is the multiset union of those of
        \begin{subequations}\label{eq:checkSigma1}
        \begin{align}
        \check G_1&=\check U_1^{\HH}\wtd G\check V_1 \nonumber \\
          &=(I+\Gamma^{\HH}\Gamma)^{1/2}(G_1+E_{11} +  E_{12}\Omega)(I+\Omega^{\HH}\Omega)^{-1/2}
                 \label{eq:checkSigma1-1}\\
          &=(I+\Gamma^{\HH}\Gamma)^{-1/2}(G_1+E_{11} + \Gamma^{\HH} E_{21})(I+\Omega^{\HH}\Omega)^{1/2},
                 \label{eq:checkSigma1-2}
        \end{align}
        \end{subequations}
        and
        \begin{subequations}\label{eq:checkSigma2}
        \begin{align}
        \check G_2&=\check U_2^{\HH}\wtd G\check V_2 \nonumber \\
          &=(I+\Gamma\Gamma^{\HH})^{1/2}(G_2+E_{22} -  E_{21}\Omega^{\HH})(I+\Omega\Omega^{\HH})^{-1/2}
                \label{eq:checkSigma2-1}\\
          &=(I+\Gamma\Gamma^{\HH})^{-1/2}(G_2+E_{22} - \Gamma E_{12})(I+\Omega\Omega^{\HH})^{1/2};
                \label{eq:checkSigma2-2}
        \end{align}
        \end{subequations}
  \item We have
        \begin{subequations}\label{eq:checkSigma1Sigma2}
        \begin{align}
        \sigma_{\min}(\check G_1)\ge\sigma_{\min}(G_1)-\|E_{11}\|_2
                       -2c\,\frac {\varepsilon\,\|(E_{21},E_{12}^{\HH})\|}{\munderbar\delta}, \label{eq:checkSigma1Sigma2-1} \\
        \sigma_{\max}(\check G_2)\le\sigma_{\max}(G_2)+\|E_{22}\|_2+2c\,\frac {\varepsilon\,\|(E_{21},E_{12}^{\HH})\|}{\munderbar\delta},
                            \label{eq:checkSigma1Sigma2-2}
        \end{align}
        \end{subequations}
        where $\sigma_{\min}(\check G_1)$ and $\sigma_{\max}(\check G_2)$
        are the smallest singular value of $\check G_1$ and the largest singular value of $\check G_2$,
        respectively;
  \item The left and right singular subspaces of $\wtd G$ associated with the part of its singular values $\sv(\check G)$
        are spanned by the columns of
        \begin{subequations}\nonumber
        \begin{align}
        \check U_1 &=(U_1+U_2\Gamma)(I+\Gamma^{\HH}\Gamma)^{-1/2}, \label{eq:checkU1V1-1:apx}\\
        \check V_1 &=(V_1+V_2\Omega)(I+\Omega^{\HH}\Omega)^{-1/2}, \label{eq:checkU1V1-2:apx}
        \end{align}
        \end{subequations}
        respectively. In particular,
        \begin{subequations}\label{eq:diff(U1V1):approxBT}
        \begin{align}
        \|\check U_1-U_1\|_{\UI}
           &\le\|\Gamma\|_{\UI}
            \le\|(\Gamma,\Omega)\|
            \le2\,\frac {c\|(E_{21},E_{12}^{\HH})\|}{\munderbar\delta}, \label{eq:diff(U1V1)-1:approxBT}\\
        \|\check V_1-V_1\|_{\UI}
           &\le\|\Omega\|_{\UI}
            \le\|(\Gamma,\Omega)\|
            \le2\,\frac {c\|(E_{21},E_{12}^{\HH})\|}{\munderbar\delta}. \label{eq:diff(U1V1)-2:approxBT}
        \end{align}
        \end{subequations}
\end{enumerate}
\end{theorem}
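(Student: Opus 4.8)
The plan is to take the four claims in turn, starting from the solution pair $(\Gamma,\Omega)$ to \eqref{eq:GtG:syl:apx} and its bound \eqref{eq:bd4SVDtG} furnished by \Cref{lm:eq:GtG:syl:apx}; this is precisely claim~(a). For (b), first note that with this $(\Gamma,\Omega)$ the matrices $\check U,\check V$ in \eqref{eq:chkUV:apx} are unitary: writing $\check U=U\begin{bmatrix}I&-\Gamma^{\HH}\\\Gamma&I\end{bmatrix}\operatorname{diag}\!\big((I+\Gamma^{\HH}\Gamma)^{-1/2},(I+\Gamma\Gamma^{\HH})^{-1/2}\big)$ and using $\begin{bmatrix}I&-\Gamma^{\HH}\\\Gamma&I\end{bmatrix}^{\HH}\begin{bmatrix}I&-\Gamma^{\HH}\\\Gamma&I\end{bmatrix}=\operatorname{diag}(I+\Gamma^{\HH}\Gamma,I+\Gamma\Gamma^{\HH})$ gives $\check U^{\HH}\check U=I$, and similarly for $\check V$; thus $\check U_1=(U_1+U_2\Gamma)(I+\Gamma^{\HH}\Gamma)^{-1/2}$ and $\check V_1=(V_1+V_2\Omega)(I+\Omega^{\HH}\Omega)^{-1/2}$. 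Expanding the four blocks of $\check U^{\HH}\wtd G\check V$ with the block identities $U_1^{\HH}\wtd GV_1=G_1+E_{11}$, $U_1^{\HH}\wtd GV_2=E_{12}$, $U_2^{\HH}\wtd GV_1=E_{21}$, $U_2^{\HH}\wtd GV_2=G_2+E_{22}$ from \eqref{eq:tG:apx-1}: the two off-diagonal blocks vanish because $(\Gamma,\Omega)$ solves \eqref{eq:GtG:syl:apx} (which is exactly how that system was constructed); the $(1,1)$ block, after eliminating the cross term $\Gamma^{\HH}(G_2+E_{22})\Omega$ via \eqref{eq:GtG:syl-1:apx} and collecting $(I+\Gamma^{\HH}\Gamma)$ on the left, becomes \eqref{eq:checkSigma1-1}, while using (the conjugate transpose of) \eqref{eq:GtG:syl-2:apx} to collect $(I+\Omega^{\HH}\Omega)$ on the right gives \eqref{eq:checkSigma1-2}; the $(2,2)$ block is treated identically to produce \eqref{eq:checkSigma2}. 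Since $\check U^{\HH}\wtd G\check V=\operatorname{diag}(\check G_1,\check G_2)$ with $\check U,\check V$ unitary, the singular values of $\wtd G$ are the multiset union of those of $\check G_1$ and $\check G_2$.

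For (c) the crux is that $\check G_1$ is a \emph{small} perturbation of $G_1+E_{11}$ --- much smaller than the naive factorization of \eqref{eq:checkSigma1-1} would suggest --- because the would-be first-order terms carrying a factor $\sigma_{\max}(G)$ cancel, thanks to the coupled Sylvester structure. Writing $H:=G_1+E_{11}$, $A:=(I+\Gamma^{\HH}\Gamma)^{1/2}$, $B:=(I+\Omega^{\HH}\Omega)^{1/2}$, relations \eqref{eq:checkSigma1-1} and \eqref{eq:checkSigma1-2} read $\check G_1B=AH+AE_{12}\Omega$ and $A\check G_1=HB+\Gamma^{\HH}E_{21}B$; subtracting $HB$ from the first, $AH$ from the second, and adding shows that $X:=\check G_1-H$ solves the Sylvester equation $AX+XB=\Gamma^{\HH}E_{21}B+AE_{12}\Omega$, whose Hermitian coefficients $A,B$ have spectra in $[1,\infty)$ and are therefore separated by at least $2$. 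Hence \Cref{lm:BdSylv}(b) gives
\[
\|\check G_1-(G_1+E_{11})\|_2\le\tfrac12\bigl\|\Gamma^{\HH}E_{21}B+AE_{12}\Omega\bigr\|_2\le\tfrac{\varepsilon}{2}\Bigl(\|\Gamma\|_2\sqrt{1+\|\Omega\|_2^2}+\sqrt{1+\|\Gamma\|_2^2}\,\|\Omega\|_2\Bigr),
\]
an $O\!\big(\varepsilon\,\|(\Gamma,\Omega)\|\big)$ quantity that, via \eqref{eq:bd4SVDtG}, is bounded by $2c\,\varepsilon\,\|(E_{21},E_{12}^{\HH})\|/\munderbar\delta$. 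Then Mirsky's theorem, $\sigma_{\min}(\check G_1)\ge\sigma_{\min}(G_1+E_{11})-\|\check G_1-(G_1+E_{11})\|_2\ge\sigma_{\min}(G_1)-\|E_{11}\|_2-\|\check G_1-(G_1+E_{11})\|_2$, yields \eqref{eq:checkSigma1Sigma2-1}, and \eqref{eq:checkSigma1Sigma2-2} follows in the same way from \eqref{eq:checkSigma2} (with $\sigma_{\max}$ in place of $\sigma_{\min}$). I expect this step, and in particular the bookkeeping needed to land the constant at exactly $2c$, to be the main obstacle.

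For (d), the bases $\check U_1,\check V_1$ are the first blocks of \eqref{eq:chkUV:apx} recorded above, and \eqref{eq:SVD4tG-almost:apx} gives $\wtd G\check V_1=\check U_1\check G_1$ and $\wtd G^{\HH}\check U_1=\check V_1\check G_1^{\HH}$; substituting the SVDs of $\check G_1,\check G_2$ into \eqref{eq:SVD4tG-almost:apx} yields an SVD of $\wtd G$ whose left and right singular vectors for the singular values in $\sv(\check G_1)$ span ${\cal R}(\check U_1)$ and ${\cal R}(\check V_1)$. For the perturbation bounds, computing directly from $\check U_1=(U_1+U_2\Gamma)(I+\Gamma^{\HH}\Gamma)^{-1/2}$ and using $U_1^{\HH}U_1=I$, $U_1^{\HH}U_2=0$, $U_2^{\HH}U_2=I$,
\[
(\check U_1-U_1)^{\HH}(\check U_1-U_1)=\bigl[(I+\Gamma^{\HH}\Gamma)^{-1/2}-I\bigr]^2+(I+\Gamma^{\HH}\Gamma)^{-1/2}\Gamma^{\HH}\Gamma(I+\Gamma^{\HH}\Gamma)^{-1/2};
\]
in terms of the singular values $s_i$ of $\Gamma$ its eigenvalues are $2\big(1-(1+s_i^2)^{-1/2}\big)$, and since $2\big(1-(1+s^2)^{-1/2}\big)\le s^2$ for all $s\ge0$, each singular value of $\check U_1-U_1$ is at most the corresponding singular value of $\Gamma$, whence $\|\check U_1-U_1\|_{\UI}\le\|\Gamma\|_{\UI}$ for every unitarily invariant norm (Fan dominance). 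The remaining inequalities $\|\Gamma\|_{\UI}\le\|(\Gamma,\Omega)\|\le2c\,\|(E_{21},E_{12}^{\HH})\|/\munderbar\delta$ are immediate from \eqref{eq:scrB-norm} and \eqref{eq:bd4SVDtG}, and the estimate for $\|\check V_1-V_1\|_{\UI}$ is the same with $\Omega$ replacing $\Gamma$.
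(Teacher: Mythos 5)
Parts (a), (b), and (d) of your proposal are correct and essentially match the paper's reasoning: (a) is exactly \Cref{lm:eq:GtG:syl:apx}; your block computation for (b) is the same cancellation the paper performs; and for (d) you compute $(\check U_1-U_1)^{\HH}(\check U_1-U_1)$ and apply Fan dominance, which is equivalent to the paper's route of diagonalizing $\Gamma$ and reading off the singular values of the stacked matrix. (Both rely on $2\bigl(1-(1+s^2)^{-1/2}\bigr)\le s^2$.)

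For (c), however, there is a real gap, exactly at the spot you flagged. Your Sylvester identity $A X + X B = \Gamma^{\HH}E_{21}B + A E_{12}\Omega$ with $A=(I+\Gamma^{\HH}\Gamma)^{1/2}$, $B=(I+\Omega^{\HH}\Omega)^{1/2}$ is correct and clever, but the resulting bound
\[
\|\check G_1-(G_1+E_{11})\|_2\le \tfrac{\varepsilon}{2}\bigl(\|\Gamma\|_2\sqrt{1+\|\Omega\|_2^2}+\sqrt{1+\|\Gamma\|_2^2}\,\|\Omega\|_2\bigr)
\]
carries the factors $\sqrt{1+\|\Gamma\|_2^2}$ and $\sqrt{1+\|\Omega\|_2^2}$, which are strictly greater than $1$ whenever the perturbation is nonzero, so after inserting $\max\{\|\Gamma\|_2,\|\Omega\|_2\}\le 2c\|(E_{21},E_{12}^{\HH})\|/\munderbar\delta$ you land at $2c\,\varepsilon\|(E_{21},E_{12}^{\HH})\|/\munderbar\delta\cdot\sqrt{1+(2c\|(E_{21},E_{12}^{\HH})\|/\munderbar\delta)^2}$, which is strictly larger than the claimed $2c\,\varepsilon\|(E_{21},E_{12}^{\HH})\|/\munderbar\delta$. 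No amount of bookkeeping closes this without a new idea. The paper sidesteps the issue entirely: it uses \eqref{eq:checkSigma1-1} for $\check G_1$ and \eqref{eq:checkSigma1-2} for $\check G_1^{\HH}$, so that in $\check G_1\check G_1^{\HH}$ the factor $(I+\Omega^{\HH}\Omega)^{\pm1/2}$ cancels outright and $(I+\Gamma^{\HH}\Gamma)^{\pm1/2}$ becomes a pure similarity; then $[\sigma_{\min}(\check G_1)]^2$ equals the smallest eigenvalue of the product $(G_1+E_{11}+E_{12}\Omega)(G_1+E_{11}+\Gamma^{\HH}E_{21})^{\HH}$, which factors into two terms each bounded below by $\sigma_{\min}(G_1)-\|E_{11}\|_2-\varepsilon\|(\Gamma,\Omega)\|$, yielding the constant $2c$ cleanly. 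If you want to salvage your route, you would have to exploit that $A,B$ shrink rather than merely bound the right side, but the product trick is the efficient fix.
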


\begin{proof}
Only item (c) and the inequalities in \eqref{eq:diff(U1V1):approxBT} of item (d) need proofs. For item (c),
using \eqref{eq:checkSigma1-1} for $\check G_1$ and \eqref{eq:checkSigma1-2} for $\check G_1$ in $\check G_1^{\HH}$
below,
we get
$$
\check G_1\check G_1^{\HH}
  =(I+\Gamma^{\HH}\Gamma)^{1/2}(G_1+E_{11} +  E_{12}\Omega)
               (G_1+E_{11} + \Gamma^{\HH} E_{21})^{\HH}(I+\Gamma^{\HH}\Gamma)^{-1/2}.
$$
Hence
$$
\big[\sigma_{\min}(\check G_1)\big]^2=\lambda_{\min}(\check G_1\check G_1^{\HH})
 =\lambda_{\min}\left((G_1+E_{11} +  E_{12}\Omega)(G_1+E_{11} + \Gamma^{\HH} E_{21})^{\HH}\right),
$$
where $\lambda_{\min}(\cdot)$ is the smallest eigenvalues of a Hermitian matrix.
Therefore, with the help of \eqref{eq:bd4SVDtG}, we have
\begin{align}
\big[\sigma_{\min}(\check G_1)\big]^2
  &=\left\|\left[(G_1+E_{11} +  E_{12}\Omega)(G_1+E_{11} + \Gamma^{\HH} E_{21})^{\HH}\right]^{-1}\right\|_2^{-1}
           \nonumber\\
  &\ge\left\|(G_1+E_{11} + \Gamma^{\HH} E_{21})^{-\HH}\right\|_2^{-1}
      \left\|(G_1+E_{11} +  E_{12}\Omega)^{-1}\right\|_2^{-1} \nonumber\\
  &\ge \left(\sigma_{\min}(G_1)-\|E_{11}\|_2-\|\Gamma\|_2\|E_{21}\|_2\right)
       \left(\sigma_{\min}(G_1)-\|E_{11}\|_2-\|E_{12}\|_2\|\Omega\|_2\right) \nonumber\\
  &\ge\Big(\sigma_{\min}(G_1)-\|E_{11}\|_2-\|(\Gamma,\Omega)\|\varepsilon\Big)^2 \nonumber\\
  &\ge\left(\sigma_{\min}(G_1)-\|E_{11}\|_2
            -2c\,\frac {\varepsilon\,\|(E_{21},E_{12}^{\HH})\|}{\munderbar\delta}\right)^2,\label{eq:SVD-almost:pf-1}
\end{align}
yielding the first inequality in \eqref{eq:checkSigma1Sigma2}. Similarly, we get the second inequality there by
using \eqref{eq:checkSigma2}.

Now we prove the inequalities in \eqref{eq:diff(U1V1):approxBT}. We have
\begin{align*}
\check U_1-U_1&=U_1\big[(I+\Gamma^{\HH}\Gamma)^{-1/2}-I\big]+U_2\Gamma(I+\Gamma^{\HH}\Gamma)^{-1/2} \\
   &=[-U_1,U_2]\begin{bmatrix}
              I-(I+\Gamma^{\HH}\Gamma)^{-1/2} \\
              \Gamma(I+\Gamma^{\HH}\Gamma)^{-1/2}
            \end{bmatrix}.
\end{align*}
Let $\Gamma=Z\Xi W^{\HH}$ be the SVD of $\Gamma$. We find
$$
\begin{bmatrix}
              I-(I+\Gamma^{\HH}\Gamma)^{-1/2} \\
              \Gamma(I+\Gamma^{\HH}\Gamma)^{-1/2}
            \end{bmatrix}=\begin{bmatrix}
                W &  \\
                 & Z
              \end{bmatrix}
            \begin{bmatrix}
              I-(I+\Xi^{\HH}\Xi)^{-1/2} \\
              \Xi(I+\Xi^{\HH}\Xi)^{-1/2}
            \end{bmatrix} W^{\HH},
$$
where for the middle matrix on the right, $I-(I+\Xi^{\HH}\Xi)^{-1/2}$ is diagonal and
$\Xi(I+\Xi^{\HH}\Xi)^{-1/2}$ is leading diagonal. Hence the  singular values of the middle matrix are given by: for each singular value $\gamma$ of $\Gamma$,
\begin{align}
\sqrt{\left(1-\frac 1{\sqrt{1+\gamma^2}}\right)^2+\left(\frac {\gamma}{\sqrt{1+\gamma^2}}\right)^2}
  &=\sqrt{2\left(1-\frac 1{\sqrt{1+\gamma^2}}\right)} \label{eq:SVD-almost:pf-2}\\
  &=\frac {\sqrt 2\,\gamma}{\big[\sqrt{1+\gamma^2}(\sqrt{1+\gamma^2}+1)\big]^{1/2}} \label{eq:SVD-almost:pf-3} \\
  &\le\gamma. \nonumber
\end{align}
Therefore, we
get\footnote {By \eqref{eq:SVD-almost:pf-2} and \eqref{eq:SVD-almost:pf-3}, we conclude that for the spectral norm
   $$
   \|\check U_1-U_1\|_2
           =\frac {\sqrt 2\,\|\Gamma\|_2}{\big[\sqrt{1+\|\Gamma\|_2^2}(\sqrt{1+\|\Gamma\|_2^2}+1)\big]^{1/2}}, \,\,
        \|\check V_1-V_1\|_2
           =\frac {\sqrt 2\,\|\Omega\|_2}{\big[\sqrt{1+\|\Omega\|_2^2}(\sqrt{1+\|\Omega\|_2^2}+1)\big]^{1/2}}.
   $$
   }
$$
\|\check U_1-U_1\|_{\UI}
            =\left\| \begin{bmatrix}
              I-(I+\Xi^{\HH}\Xi)^{-1/2} \\
              \Xi(I+\Xi^{\HH}\Xi)^{-1/2}
            \end{bmatrix}\right\|_{\UI}
            \le\|\Gamma\|_{\UI},
$$
yielding \eqref{eq:diff(U1V1)-1:approxBT} in light of \eqref{eq:bd4SVDtG}.
Similarly, we have \eqref{eq:diff(U1V1)-2:approxBT}.
\end{proof}

The lower and upper bound on $\sigma_{\min}(\check G_1)$ and $\sigma_{\max}(\check G_2)$, respectively, in
\eqref{eq:checkSigma1Sigma2}, although always true, do not provide useful information, unless also
$\sigma_{\min}(G_1)>\sigma_{\max}(G_2)$, in which case it can be used to establish a sufficient condition
to ensure $\sigma_{\min}(\check G_1)>\sigma_{\max}(\check G_2)$.

\begin{corollary}\label{cor:SVD-almost}
Given $G,\,\wtd G\in\bbC^{m\times n}$, let $G$ be decomposed as in \eqref{eq:G:SVD:apx}, and partition
$U^{\HH}\wtd GV$ according to \eqref{eq:tG:apx-1}. Let  $\delta$,
$\munderbar\delta$,  and $\varepsilon$ be defined as in \eqref{eq:quantities-apx}, and suppose
$\sigma_{\min}(G_1)>\sigma_{\max}(G_2)$. If \eqref{eq:tG:small-pert} with $c=1$ is satisfied, then
the top $r$ singular values of $\wtd G$
are exactly the $r$ singular values of $\check G_1$, and
\begin{equation}\label{eq:checkSigma1-improved}
\sigma_{\min}(G_1)-\|E_{11}\|_2
  \le\sigma_{\min}(\check G_1)
  \le\sigma_{\min}(G_1)+\|E_{11}\|_2+\frac {2\varepsilon^2}
                                      {\munderbar\delta+\sqrt{\munderbar\delta^2+4\varepsilon^2}}.
\end{equation}
\end{corollary}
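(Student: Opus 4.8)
The plan is to route everything through $\sigma_r(\wtd G)$ and then argue directly on the block matrix $U^{\HH}\wtd GV$ of \eqref{eq:tG:apx-1}, deliberately \emph{not} expanding $\check G_1$ via its explicit formulas \eqref{eq:checkSigma1}. First, since $\sigma_{\min}(G_1)>\sigma_{\max}(G_2)$ and every element of $\sv_{\ext}(G_2)$ lies in $[0,\sigma_{\max}(G_2)]$, strictly below $\sv(G_1)$, the minimum in \eqref{eq:quantities-apx-2} is attained at the pair $(\sigma_{\min}(G_1),\sigma_{\max}(G_2))$, so $\delta=\sigma_{\min}(G_1)-\sigma_{\max}(G_2)$ and $\munderbar\delta=\delta-\|E_{11}\|_2-\|E_{22}\|_2$. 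As $c=1$ is admissible in \Cref{lm:veri-stew-bT} under the present hypothesis, \Cref{thm:SVD-almost} applies; subtracting \eqref{eq:checkSigma1Sigma2-2} from \eqref{eq:checkSigma1Sigma2-1} with $c=1$ gives $\sigma_{\min}(\check G_1)-\sigma_{\max}(\check G_2)\ge\munderbar\delta-4\varepsilon\|(E_{21},E_{12}^{\HH})\|/\munderbar\delta=\munderbar\delta(1-4\kappa_2)>0$ since $\kappa_2<1/4$. Because $\sv(\wtd G)$ is the multiset union of $\sv(\check G_1)$ and $\sv(\check G_2)$ with $|\sv(\check G_1)|=r$, this strict gap forces the $r$ largest singular values of $\wtd G$ to be exactly the $r$ singular values of $\check G_1$; in particular $\sigma_{\min}(\check G_1)=\sigma_r(\wtd G)$, which I will exploit for both inequalities.

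For the lower bound I would apply \Cref{lm:SVr} to the $r$ leading rows $[G_1+E_{11},\,E_{12}]$ of $U^{\HH}\wtd GV$ to get $\sigma_r(\wtd G)=\sigma_r(U^{\HH}\wtd GV)\ge\sigma_{\min}\bigl([G_1+E_{11},\,E_{12}]\bigr)$. Since $[G_1+E_{11},\,E_{12}]\,[G_1+E_{11},\,E_{12}]^{\HH}=(G_1+E_{11})(G_1+E_{11})^{\HH}+E_{12}E_{12}^{\HH}$ with the second summand positive semidefinite, $\sigma_{\min}\bigl([G_1+E_{11},\,E_{12}]\bigr)\ge\sigma_{\min}(G_1+E_{11})\ge\sigma_{\min}(G_1)-\|E_{11}\|_2$ by Weyl/Mirsky, which is the left inequality of \eqref{eq:checkSigma1-improved}.

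For the upper bound I would use the Courant--Fischer identity $\sigma_r(\wtd G)=\min_{\dim\mathcal W=n-r+1}\ \max_{0\ne w\in\mathcal W}\|(U^{\HH}\wtd GV)w\|/\|w\|$ and test it with the subspace $\mathcal W$ of those $w\in\bbC^n$ whose first $r$ entries are a scalar multiple of a fixed unit right singular vector $\hat v\in\bbC^r$ of $G_1+E_{11}$ for its smallest singular value $\alpha:=\sigma_{\min}(G_1+E_{11})$; this $\mathcal W$ has dimension $1+(n-r)=n-r+1$. Writing a generic $w\in\mathcal W$ as $w=\begin{bmatrix}t\hat v\\ w_2\end{bmatrix}$ with $t\in\bbC$, $w_2\in\bbC^{n-r}$, $\beta:=\sigma_{\max}(G_2+E_{22})$, and, for unit $w$, $|t|=\cos\theta$, $\|w_2\|=\sin\theta$, the triangle inequality on the two block rows of $(U^{\HH}\wtd GV)w$ with $\|E_{12}\|_2,\|E_{21}\|_2\le\varepsilon$ yields $\|(U^{\HH}\wtd GV)w\|^2\le(\alpha\cos\theta+\varepsilon\sin\theta)^2+(\varepsilon\cos\theta+\beta\sin\theta)^2=\left\|\begin{bmatrix}\alpha&\varepsilon\\ \varepsilon&\beta\end{bmatrix}\begin{bmatrix}\cos\theta\\ \sin\theta\end{bmatrix}\right\|^2$. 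Maximizing over $\theta$ gives $\sigma_r(\wtd G)\le\sigma_{\max}\!\left(\begin{bmatrix}\alpha&\varepsilon\\ \varepsilon&\beta\end{bmatrix}\right)=\frac12\bigl(\alpha+\beta+\sqrt{(\alpha-\beta)^2+4\varepsilon^2}\bigr)=\alpha+\frac{2\varepsilon^2}{(\alpha-\beta)+\sqrt{(\alpha-\beta)^2+4\varepsilon^2}}$. Finally, $\alpha-\beta\ge\munderbar\delta>0$ (Mirsky, together with $\delta=\sigma_{\min}(G_1)-\sigma_{\max}(G_2)$), the map $t\mapsto2\varepsilon^2/\bigl(t+\sqrt{t^2+4\varepsilon^2}\bigr)$ is decreasing on $(0,\infty)$, and $\alpha\le\sigma_{\min}(G_1)+\|E_{11}\|_2$, so $\sigma_{\min}(\check G_1)=\sigma_r(\wtd G)\le\sigma_{\min}(G_1)+\|E_{11}\|_2+2\varepsilon^2/\bigl(\munderbar\delta+\sqrt{\munderbar\delta^2+4\varepsilon^2}\bigr)$, the right inequality of \eqref{eq:checkSigma1-improved}.

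I expect the upper bound to be the only subtle step. Working instead from the explicit $\check G_1$ in \eqref{eq:checkSigma1} and estimating $\sigma_{\min}$ naively brings in a spurious factor $\sqrt{1+\|\Gamma\|_2^2}=1+O\bigl((\varepsilon/\munderbar\delta)^2\bigr)$ multiplying $\sigma_{\min}(G_1)$, hence an error of order $\sigma_{\min}(G_1)(\varepsilon/\munderbar\delta)^2$ that can dwarf the claimed additive term; the remedy is exactly the identity $\sigma_{\min}(\check G_1)=\sigma_r(\wtd G)$ from the first paragraph, which lets one regard $\check G_1$ as a compression of $\wtd G$ and thus use the Courant--Fischer test subspace $\mathcal W$. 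The remaining points, namely recognizing the bound as the top singular value of the $2\times2$ nonnegative symmetric matrix $\begin{bmatrix}\alpha&\varepsilon\\ \varepsilon&\beta\end{bmatrix}$ and using monotonicity in the gap to replace $\alpha-\beta$ by $\munderbar\delta$, are routine.
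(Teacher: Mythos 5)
Your proof is correct, and the first two steps (establishing $\sigma_{\min}(\check G_1)>\sigma_{\max}(\check G_2)$ from \eqref{eq:checkSigma1Sigma2}, and the lower bound $\sigma_{\min}(\check G_1)=\sigma_r(\wtd G)\ge\sigma_{\min}(G_1+E_{11})\ge\sigma_{\min}(G_1)-\|E_{11}\|_2$ via \Cref{lm:SVr}) coincide with what the paper does, modulo your slightly longer detour through $\sigma_{\min}([G_1+E_{11},\,E_{12}])$ where the paper just reads off the last inequality in \Cref{lm:SVr}. Where you genuinely diverge is the upper bound: the paper obtains $\sigma_{\min}(\check G_1)-\sigma_{\min}(G_1+E_{11})\le 2\varepsilon^2/\bigl(\munderbar\delta+\sqrt{\munderbar\delta^2+4\varepsilon^2}\bigr)$ by directly invoking \cite[Theorem~3]{lili:2005}, whereas you re-derive exactly the special case you need from scratch by a Courant--Fischer argument: choosing the $(n-r+1)$-dimensional test subspace built from a right singular vector of $G_1+E_{11}$ at $\alpha=\sigma_{\min}(G_1+E_{11})$, reducing the block row estimates to the $2\times2$ matrix $\begin{bmatrix}\alpha&\varepsilon\\\varepsilon&\beta\end{bmatrix}$, computing its top singular value exactly, and then using monotonicity in the gap to replace $\alpha-\beta$ by $\munderbar\delta$. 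Your route is more self-contained and makes the origin of the quadratic term $2\varepsilon^2/(\munderbar\delta+\sqrt{\munderbar\delta^2+4\varepsilon^2})$ transparent; the paper's route is shorter and, by citing the full \cite[Theorem~3]{lili:2005}, immediately also gives the matching bound for all of $\sigma_1(\check G_1),\dots,\sigma_r(\check G_1)$, which the authors remark upon after the corollary.
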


\begin{proof}
We have all conclusions of \Cref{thm:SVD-almost}.
By \eqref{eq:checkSigma1Sigma2}, we have
$$
\sigma_{\min}(\check G_1)-\sigma_{\max}(\check G_2)
  \ge \munderbar\delta-4 \frac {\varepsilon\,\|(E_{21},E_{12}^{\HH})\|}{\munderbar\delta}
  =\munderbar\delta\left[1-4\frac {\varepsilon\,\|(E_{21},E_{12}^{\HH})\|}{\munderbar\delta^2}\right]
  >0,
$$
which says that the top $r$ singular values of $\wtd G$
        are exactly the $r$ singular values of $\check G_1$. As a result, applying \Cref{lm:SVr} to \eqref{eq:tG:apx-1},
        we have
\begin{equation}\label{eq:SVD-almost:pf-1(cor)}
\sigma_{\min}(\check G_1)=\sigma_r\big( U^{\HH}\wtd GV\big)
   \ge\sigma_{\min}(G_1+E_{11})
   \ge\sigma_{\min}(G_1)-\|E_{11}\|_2,
\end{equation}
where the last inequality in \eqref{eq:SVD-almost:pf-1(cor)} is a consequence of Mirsky's theorem \cite[p.204]{stsu:1990}.
This proves the first inequality in \eqref{eq:checkSigma1-improved}.
It can be seen that
$$
\sigma_{\min}(G_1+E_{11})-\sigma_{\max}(G_2+E_{22})
  \ge\sigma_{\min}(G_1)-\|E_{11}\|_2-\big[\sigma_{\max}(G_2)+\|E_{22}\|_2\big]
  =\munderbar\delta,
$$
and hence by \cite[Theorem 3]{lili:2005} combining with \eqref{eq:SVD-almost:pf-1(cor)}, we get
\begin{equation}\label{eq:SVD-almost:pf-2(cor)}
\sigma_{\min}(\check G_1)-\sigma_{\min}(G_1+E_{11})
    \le\frac {2\varepsilon^2}{\munderbar\delta+\sqrt{\munderbar\delta^2+4\varepsilon^2}},
\end{equation}
yielding the second inequality in \eqref{eq:checkSigma1-improved}.
\end{proof}

The sharper lower bound on $\sigma_{\min}(\check G_1)$ in \eqref{eq:checkSigma1-improved} than the one by
        the first inequality in \eqref{eq:checkSigma1Sigma2} is only made possible by first establishing that
        the top $r$ singular values of $\wtd G$ are exactly the $r$ singular values of $\check G_1$, which
        relies on the first inequality in \eqref{eq:checkSigma1Sigma2} for a proof, however.
More than \eqref{eq:SVD-almost:pf-2(cor)} which is just for the smallest singular value only, \cite[Theorem 3]{lili:2005} yields
$$
|\sigma_i(\check G_1)-\sigma_i(G_1+E_{11})|\le \frac {2\varepsilon^2}{\munderbar\delta+\sqrt{\munderbar\delta^2+4\varepsilon^2}}
\quad\mbox{for $1\le i\le r$}
$$
under the conditions of \Cref{cor:SVD-almost}.

\section{Discussions}\label{sec:disscuss}
\Cref{thm:SVD-almost} serves the same purpose as Stewart's \cite[Theorem~6.4]{stew:1973}, but the former
provides a variety of choices of norms for the Banach space $\scrB$ and, as a consequence, a number of results dependent of
circumstances to use.

Let us begin by realizing \Cref{thm:SVD-almost} and \Cref{cor:SVD-almost} with unitarily invariant norm $\|\cdot\|_{\UI}$
being set to the Frobenius norm, in order to compare with \Cref{thm:stew:1973SVD} \cite[Theorem~6.4]{stew:1973}.
We have two endowed norms from \eqref{eq:scrB-norm} to choose:
$$
\|(\Gamma,\Omega)\|=\sqrt{\|\Gamma\|_{\F}^2+\|\Omega\|_{\F}^2}
\quad\mbox{or}\quad
\|(\Gamma,\Omega)\|=\max\{\|\Gamma\|_{\F},\|\Omega\|_{\F}\}.
$$
We have the following corollary to \Cref{thm:SVD-almost}, where we state the conditions and the bounds on
$\|(\Gamma,\Omega)\|$ but omit the others that can be deduced from the bounds on $\|(\Gamma,\Omega)\|$.

\begin{corollary}\label{cor:SVD-almost-F}
Given $G,\,\wtd G\in\bbC^{m\times n}$, let $G$ be decomposed as in \eqref{eq:G:SVD:apx}, and partition
$U^{\HH}\wtd GV$ according to \eqref{eq:tG:apx-1}. Let  $\delta$,
$\munderbar\delta$,  and $\varepsilon$ be defined as in \eqref{eq:quantities-apx}.
\begin{enumerate}[{\rm (i)}]
  \item If
        \begin{equation}\label{eq:tG:small-pert-F1}
        \munderbar\delta>0
        \quad\mbox{and}\quad
        \kappa_2:=\frac {\varepsilon}{\munderbar\delta^2}\,\sqrt{\|E_{12}\|_{\F}^2+\|E_{21}\|_{\F}^2}<\frac 14,
        \end{equation}
        then \eqref{eq:GtG:syl':apx} has a solution $(\Gamma,\Omega)$ that satisfies
        \begin{align}
        \sqrt{\|\Gamma\|_{\F}^2+\|\Omega\|_{\F}^2}
         &\le\frac {1+\sqrt{1-4\kappa_2}}{1-2\kappa_2+\sqrt{1-4\kappa_2}}
                 \frac {\sqrt{\|E_{12}\|_{\F}^2+\|E_{21}\|_{\F}^2}}{\munderbar\delta} \nonumber \\
         &<2\,\frac {\sqrt{\|E_{12}\|_{\F}^2+\|E_{21}\|_{\F}^2}}{\munderbar\delta}. \label{eq:bd4SVDtG-F1}
        \end{align}
  \item If $\sigma_{\min}(G_1)>\sigma_{\max}(G_2)$ and if
        \begin{equation}\label{eq:tG:small-pert-F2}
        \munderbar\delta>0
        \quad\mbox{and}\quad
        \kappa_2:=\frac {\varepsilon}{\munderbar\delta^2}\,\max\{\|E_{12}\|_{\F},\|E_{21}\|_{\F}\}<\frac 14,
        \end{equation}
        then \eqref{eq:GtG:syl':apx} has a solution $(\Gamma,\Omega)$ that satisfies
        \begin{align}
        \max\{\|\Gamma\|_{\F},\|\Omega\|_{\F}\}
         &\le\frac {1+\sqrt{1-4\kappa_2}}{1-2\kappa_2+\sqrt{1-4\kappa_2}}
                 \frac {\max\{\|E_{12}\|_{\F},\|E_{21}\|_{\F}\}}{\munderbar\delta} \nonumber \\
         &<2\,\frac {\max\{\|E_{12}\|_{\F},\|E_{21}\|_{\F}\}}{\munderbar\delta}. \label{eq:bd4SVDtG-F2}
        \end{align}
\end{enumerate}
\end{corollary}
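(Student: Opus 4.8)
The plan is to observe that \Cref{cor:SVD-almost-F} is nothing more than the specialization of \Cref{lm:eq:GtG:syl:apx} --- which underlies item~(a) of \Cref{thm:SVD-almost} --- to the choice $\|\cdot\|_{\UI}=\|\cdot\|_{\F}$, so that no new estimates are needed. The only work is to read off the constant $c$ supplied by \Cref{lm:veri-stew-bT} in each of the two endowed-norm settings and to rewrite the abstract quantities $\|(\Gamma,\Omega)\|$ and $\|(E_{21},E_{12}^{\HH})\|$ in terms of the Frobenius norms of the individual blocks. For this I would use the elementary identities, valid for the Frobenius norm,
$$
\left\|\begin{bmatrix} X & 0 \\ 0 & Y\end{bmatrix}\right\|_{\F}=\sqrt{\|X\|_{\F}^2+\|Y\|_{\F}^2}
\quad\mbox{and}\quad
\|X^{\HH}\|_{\F}=\|X\|_{\F},
$$
the second of which is what lets $E_{12}$ and $E_{21}$ enter the final bounds symmetrically.

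For part~(i) I would endow $\scrB$ with the norm \eqref{eq:scrB-norm-1} realized at $\|\cdot\|_{\UI}=\|\cdot\|_{\F}$, so that $\|(\Gamma,\Omega)\|=\sqrt{\|\Gamma\|_{\F}^2+\|\Omega\|_{\F}^2}$ and, using the identities above, $\|(E_{21},E_{12}^{\HH})\|=\sqrt{\|E_{12}\|_{\F}^2+\|E_{21}\|_{\F}^2}$. By item~(i) of \Cref{lm:veri-stew-bT}, specifically the branch ``$\|\cdot\|_{\UI}=\|\cdot\|_{\F}$'' of \eqref{eq:constant-c}, the constant is $c=1$ \emph{with no separation hypothesis}, which is exactly why part~(i) imposes none. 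Substituting $c=1$ into the hypothesis \eqref{eq:tG:small-pert} and the conclusion \eqref{eq:bd4SVDtG} of \Cref{lm:eq:GtG:syl:apx} turns them verbatim into \eqref{eq:tG:small-pert-F1} and \eqref{eq:bd4SVDtG-F1}; note in particular that the $\kappa_2$ of \eqref{eq:tG:small-pert} with $c=1$ coincides with the $\kappa_2$ of \eqref{eq:tG:small-pert-F1}.

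For part~(ii) I would instead endow $\scrB$ with the norm \eqref{eq:scrB-norm-2} realized at $\|\cdot\|_{\UI}=\|\cdot\|_{\F}$, so that $\|(\Gamma,\Omega)\|=\max\{\|\Gamma\|_{\F},\|\Omega\|_{\F}\}$ and $\|(E_{21},E_{12}^{\HH})\|=\max\{\|E_{12}\|_{\F},\|E_{21}\|_{\F}\}$. Here \eqref{eq:scrB-norm-2} yields $c=1$ only under the separation assumption $\sigma_{\min}(G_1)>\sigma_{\max}(G_2)$ --- the first branch of \eqref{eq:constant-c'} in item~(ii) of \Cref{lm:veri-stew-bT} --- which is precisely the extra hypothesis appearing in part~(ii); with $c=1$ in hand, \eqref{eq:tG:small-pert} and \eqref{eq:bd4SVDtG} become \eqref{eq:tG:small-pert-F2} and \eqref{eq:bd4SVDtG-F2}.

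There is no genuine obstacle; the corollary is a bookkeeping exercise. The two points that need care are the asymmetry between the two parts --- arising because \eqref{eq:scrB-norm-1} enjoys $c=1$ for the Frobenius norm unconditionally, whereas \eqref{eq:scrB-norm-2} needs the gap condition --- and the correct matching of the two displayed norms in the statement with the Frobenius realizations of \eqref{eq:scrB-norm-1} and \eqref{eq:scrB-norm-2}, for which $\|X^{\HH}\|_{\F}=\|X\|_{\F}$ is exactly what puts $E_{12}$ and $E_{21}$ on equal footing in \eqref{eq:tG:small-pert-F1}--\eqref{eq:bd4SVDtG-F2}.
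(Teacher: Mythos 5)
Your proposal is correct and matches exactly how the paper derives this corollary: specialize \Cref{lm:eq:GtG:syl:apx} to $\|\cdot\|_{\UI}=\|\cdot\|_{\F}$, using the endowed norm \eqref{eq:scrB-norm-1} for part~(i) (where $c=1$ unconditionally for the Frobenius norm by \eqref{eq:constant-c}) and \eqref{eq:scrB-norm-2} for part~(ii) (where $c=1$ needs the gap $\sigma_{\min}(G_1)>\sigma_{\max}(G_2)$ by \eqref{eq:constant-c'}), and translate $\|(E_{21},E_{12}^{\HH})\|$ into block Frobenius norms via $\|E_{12}^{\HH}\|_{\F}=\|E_{12}\|_{\F}$. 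The paper states the corollary without a separate proof precisely because it is this bookkeeping exercise, and you have carried it out accurately, including the explanation of why the separation hypothesis appears only in part~(ii).
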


In comparing \Cref{cor:SVD-almost-F} with \Cref{thm:stew:1973SVD} \cite[Theorem~6.4]{stew:1973}, we note
\Cref{cor:SVD-almost-F}(i) provides better results than \Cref{thm:stew:1973SVD} in their conditions:
\eqref{eq:tG:small-pert-F1} {\em vs.} \eqref{eq:stew:cond}, and bounds: \eqref{eq:bd4SVDtG-F1} {\em vs.} \eqref{eq:stew:1973SVD-conc},
because
\begin{align*}
\hat\varepsilon^2=\|E_{12}\|_{\F}^2+\|E_{21}\|_{\F}^2
    &\ge\max\{\|E_{12}\|_2,\|E_{21}\|_2\}\sqrt{\|E_{12}\|_{\F}^2+\|E_{21}\|_{\F}^2} \\
    &=\varepsilon\sqrt{\|E_{12}\|_{\F}^2+\|E_{21}\|_{\F}^2}.
\end{align*}
Such an improvement of \Cref{cor:SVD-almost-F}(i) over \Cref{thm:stew:1973SVD} could be considered marginal because it can be easily recovered
by just refining Stewart's relevant estimates \cite{stew:1973}. However, the improvement by \Cref{cor:SVD-almost-F}(ii)
over \Cref{thm:stew:1973SVD}, under the condition $\sigma_{\min}(G_1)>\sigma_{\max}(G_2)$, unlikely can be achieved
by simply refining Stewart's arguments there.

Our original motivation to revisit  this classical result of Stewart's is the need for a version of \Cref{thm:stew:1973SVD}
in the spectral norm while we are working on an error analysis for model order reduction by  balanced truncation
\cite{zhli:2024}. Let us look at what \Cref{thm:SVD-almost} leads to for the spectral norm, for which the
two endowed norms from \eqref{eq:scrB-norm} collapse to the same one
$$
\|(\Gamma,\Omega)\|=\max\{\|\Gamma\|_2,\|\Omega\|_2\}.
$$
We have the following corollary to \Cref{thm:SVD-almost}, which yields far sharper results than those such as
\eqref{eq:stew:1973SVD-conc'} that would otherwise have to be derived from \Cref{thm:stew:1973SVD}.

\begin{corollary}\label{cor:SVD-almost-sp}
Given $G,\,\wtd G\in\bbC^{m\times n}$, let $G$ be decomposed as in \eqref{eq:G:SVD:apx}, and partition
$U^{\HH}\wtd GV$ according to \eqref{eq:tG:apx-1}. Let  $\delta$,
$\munderbar\delta$,  and $\varepsilon$ be defined as in \eqref{eq:quantities-apx}.
\begin{enumerate}[{\rm (i)}]
  \item If
        \begin{equation}\label{eq:tG:small-pert-sp1}
        \munderbar\delta>0
        \quad\mbox{and}\quad
        \kappa_2:=\left(\frac {\pi}2\right)^2\frac {\varepsilon^2}{\munderbar\delta^2}<\frac 14,
        \end{equation}
        then \eqref{eq:GtG:syl':apx} has a solution $(\Gamma,\Omega)$ that satisfies
        \begin{equation}\label{eq:bd4SVDtG-sp1}
        \max\{\|\Gamma\|_2,\|\Omega\|_2\}
         \le\frac {1+\sqrt{1-4\kappa_2}}{1-2\kappa_2+\sqrt{1-4\kappa_2}}
                 \frac {\pi}2\frac {\varepsilon}{\munderbar\delta}
         < {\pi}\,\frac {\varepsilon}{\munderbar\delta}.
        \end{equation}
  \item If $\sigma_{\min}(G_1)>\sigma_{\max}(G_2)$ and if
        \begin{equation}\label{eq:tG:small-pert-sp2}
        \munderbar\delta>0
        \quad\mbox{and}\quad
        \kappa_2:=\frac {\varepsilon^2}{\munderbar\delta^2}<\frac 14,
        \end{equation}
        then \eqref{eq:GtG:syl':apx} has a solution $(\Gamma,\Omega)$ that satisfies
        \begin{equation}\label{eq:bd4SVDtG-sp2}
        \max\{\|\Gamma\|_2,\|\Omega\|_2\}
         \le\frac {1+\sqrt{1-4\kappa_2}}{1-2\kappa_2+\sqrt{1-4\kappa_2}}
                 \frac {\varepsilon}{\munderbar\delta}
         <2\,\frac {\varepsilon}{\munderbar\delta}.
        \end{equation}
\end{enumerate}
\end{corollary}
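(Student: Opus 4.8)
The plan is to derive both parts by specializing \Cref{lm:eq:GtG:syl:apx} (and, through it, the norm bookkeeping of \Cref{lm:veri-stew-bT}) to the case in which the generic unitarily invariant norm $\|\cdot\|_{\UI}$ is taken to be the spectral norm $\|\cdot\|_2$. The first thing I would record is that for $\|\cdot\|_{\UI}=\|\cdot\|_2$ the two endowed norms \eqref{eq:scrB-norm-1} and \eqref{eq:scrB-norm-2} on $\scrB$ coincide and both equal $\max\{\|\Gamma\|_2,\|\Omega\|_2\}$, by \eqref{eq:UI=2:equiv}; likewise $\|(E_{21},E_{12}^{\HH})\|=\max\{\|E_{21}\|_2,\|E_{12}^{\HH}\|_2\}=\max\{\|E_{21}\|_2,\|E_{12}\|_2\}=\varepsilon$. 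Substituting these identifications into the condition \eqref{eq:tG:small-pert} and the bound \eqref{eq:bd4SVDtG} of \Cref{lm:eq:GtG:syl:apx} turns them into $\kappa_2=c^2\varepsilon^2/\munderbar\delta^2<\tfrac14$ and $\max\{\|\Gamma\|_2,\|\Omega\|_2\}\le\frac{1+\sqrt{1-4\kappa_2}}{1-2\kappa_2+\sqrt{1-4\kappa_2}}\,\frac{c\varepsilon}{\munderbar\delta}<\frac{2c\varepsilon}{\munderbar\delta}$, where $c$ is the constant furnished by \Cref{lm:veri-stew-bT}. The existence of the solution pair $(\Gamma,\Omega)$ is then inherited verbatim from \Cref{lm:eq:GtG:syl:apx}.

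It remains to insert the correct $c$ in each regime. For part (i) — only $\munderbar\delta>0$ is assumed — I would invoke \Cref{lm:veri-stew-bT}(i) with the endowed norm \eqref{eq:scrB-norm-1}, for which \eqref{eq:constant-c} gives $c=\pi/2$ in the generic (``otherwise'') branch; this is the sharper of the two admissible choices, since the endowed norm \eqref{eq:scrB-norm-2} would force $c=\pi$ via \eqref{eq:constant-c'}, and the choice is harmless because the two norms agree here. Putting $c=\pi/2$ gives $\kappa_2=(\pi/2)^2\varepsilon^2/\munderbar\delta^2$ and $2c=\pi$, which are exactly \eqref{eq:tG:small-pert-sp1} and \eqref{eq:bd4SVDtG-sp1}. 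For part (ii), the extra hypothesis $\sigma_{\min}(G_1)>\sigma_{\max}(G_2)$ places us in the first branch of \eqref{eq:constant-c} (equivalently of \eqref{eq:constant-c'}), so $c=1$; with $c=1$ the specialized condition and bound become precisely \eqref{eq:tG:small-pert-sp2} and \eqref{eq:bd4SVDtG-sp2}.

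There is no real analytic obstacle here: the corollary is a pure translation of \Cref{thm:SVD-almost}/\Cref{lm:eq:GtG:syl:apx} into the spectral norm. The only point deserving a moment of care is the one highlighted above — selecting the description of the spectral-norm case that yields the smaller constant ($c=\pi/2$, hence the final $\pi\varepsilon/\munderbar\delta$ in part (i)) rather than the one giving $c=\pi$ — together with the remark that, because the two endowed norms collapse to $\max\{\|\Gamma\|_2,\|\Omega\|_2\}$, nothing is lost in the passage. The auxiliary conclusions (b)–(d) of \Cref{thm:SVD-almost} specialize as well, but, as the corollary is stated, I would carry through only the condition and the bound on $(\Gamma,\Omega)$.
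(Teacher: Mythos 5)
Your proposal is correct and matches the paper's (implicit) reasoning exactly: the corollary is presented as a direct specialization of \Cref{thm:SVD-almost} and \Cref{lm:eq:GtG:syl:apx} to $\|\cdot\|_{\UI}=\|\cdot\|_2$, noting that the two endowed norms collapse to $\max\{\|\Gamma\|_2,\|\Omega\|_2\}$, that $\|(E_{21},E_{12}^{\HH})\|=\varepsilon$, and that $c=\pi/2$ in part (i) while $c=1$ under the gap hypothesis in part (ii). Your observation that one should pick the endowed norm \eqref{eq:scrB-norm-1} to obtain $c=\pi/2$ rather than $c=\pi$ is precisely the right bookkeeping step.
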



During the proof of \Cref{lm:BdCpldSylv}, we commented that inequality \eqref{ineq:CpldSylv-UI-2} had been really implied
by Wedin \cite[section 3]{wedi:1972}, and inequality \eqref{ineq:stew-F} may also be essentially implied in \cite{wedi:1972} but with
his $\delta$
defined as
\begin{equation}\label{eq:delta-wedi1972}
\delta:=\min\Big\{\min_{\mu\in\sv(A),\,\nu\in\sv(B)}\,|\mu-\nu|,\,\sigma_{\min}(A)\Big\}
\end{equation}
which is the same as the one in \eqref{ineq:stew-F} for the case $s\ne t$ because then $0\in\sv_{\ext}(B)$,
but can be different if $s=t$ for which case $\sv_{\ext}(B)=\sv(B)$ that may or may not contain $0$, however.
Both \eqref{ineq:CpldSylv-UI-2} and  \eqref{ineq:stew-F} are used  to develop {\em the generalized $\sin\theta$ theorems\/} for SVD there
(see, also, \cite[p.21-7]{li:2014HLA}). In the same spirit, we can use \eqref{ineq:CpldSylv-UI-1}
and \eqref{ineq:CpldSylv-UI'} to establish more generalized $\sin\theta$ theorems for SVD. In fact, we have the following theorem.

\begin{theorem}\label{thm:more-sin(theta)-SVD}
Let $G\in\bbC^{m\times n}$ be decomposed as in \eqref{eq:G:SVD:apx}, and let $\wtd G\in\bbC^{m\times n}$ admit
a decomposition in the same form as in \eqref{eq:G:SVD:apx}, except with tildes on all symbols. Let
$$
R=G\wtd V_1-\wtd U_1\wtd G_1, \quad S=G^{\HH}\wtd U_1-\wtd V_1\wtd G_1^{\HH}.
$$
If
$$
\delta:=\min_{\mu\in\sv(\wtd G_1),\,\nu\in\sv_{\ext}(G_2)}\,|\mu-\nu|>0,
$$
then
        $$
        \left\|\begin{bmatrix}
          \sin\Theta(\cR(U_1),\cR(\wtd U_1)) & 0 \\
           0 & \sin\Theta(\cR(V_1),\cR(\wtd V_1))
        \end{bmatrix}\right\|_{\UI}\le c\,\frac 1{\delta}\left\|\begin{bmatrix}
                        R & 0 \\
                        0 & S
                      \end{bmatrix}\right\|_{\UI},
        $$
        where
$c={\pi}/2$ in general, but $c=1$ if also $\sigma_{\min}(\wtd G_1)>\sigma_{\max}(G_2)$ or for the Frobenius norm.
Here $\Theta(\cR(U_1),\cR(\wtd U_1))$ is the diagonal matrix of the canonical angles between the subspaces
        $\cR(U_1)$ and $\cR(\wtd U_1)$ {\rm \cite[p.21-2]{li:2014HLA}, \cite{stsu:1990}}.
\end{theorem}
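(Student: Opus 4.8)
The plan is to reduce the theorem to the coupled Sylvester bounds of \Cref{lm:BdCpldSylv} by projecting the residuals $R$ and $S$ onto the complementary subspaces $\cR(U_2)$ and $\cR(V_2)$, and then to translate the resulting estimate on the projected blocks into an estimate on the canonical angles. First I would record two consequences of the block decomposition \eqref{eq:G:SVD:apx}: reading off the lower block of $U^{\HH}GV=\begin{bmatrix} G_1 & 0 \\ 0 & G_2\end{bmatrix}$ gives $U_2^{\HH}G=G_2V_2^{\HH}$, and reading off the last columns of $GV=U\begin{bmatrix} G_1 & 0 \\ 0 & G_2\end{bmatrix}$ gives $GV_2=U_2G_2$, equivalently $V_2^{\HH}G^{\HH}=G_2^{\HH}U_2^{\HH}$.

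Next I would set $P:=U_2^{\HH}\wtd U_1\in\bbC^{(m-r)\times r}$ and $Q:=V_2^{\HH}\wtd V_1\in\bbC^{(n-r)\times r}$, left-multiply $R=G\wtd V_1-\wtd U_1\wtd G_1$ by $U_2^{\HH}$ and $S=G^{\HH}\wtd U_1-\wtd V_1\wtd G_1^{\HH}$ by $V_2^{\HH}$, and substitute the two identities to obtain
\begin{equation*}
P\wtd G_1-G_2Q=-U_2^{\HH}R,\qquad Q\wtd G_1^{\HH}-G_2^{\HH}P=-V_2^{\HH}S,
\end{equation*}
which is precisely a set of coupled Sylvester equations of the form \eqref{eq:CpldSyvl} with $A=\wtd G_1$, $B=G_2$ and right-hand sides $-U_2^{\HH}R$, $-V_2^{\HH}S$. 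Since $\delta>0$ forces $\sv(\wtd G_1)\cap\sv_{\ext}(G_2)=\emptyset$, \Cref{lm:BdCpldSylv} guarantees that this system has a \emph{unique} solution pair, which therefore must be $(P,Q)$, and the appropriate item of that lemma yields
\begin{equation*}
\left\|\begin{bmatrix} P & 0 \\ 0 & Q\end{bmatrix}\right\|_{\UI}\le\frac c{\delta}\left\|\begin{bmatrix} U_2^{\HH}R & 0 \\ 0 & V_2^{\HH}S\end{bmatrix}\right\|_{\UI},
\end{equation*}
with $c=1$ for the Frobenius norm by \eqref{ineq:stew-F}, $c=1$ when in addition $\sigma_{\min}(\wtd G_1)>\sigma_{\max}(G_2)$ by \eqref{ineq:CpldSylv-UI-1} (noting that then $\sigma_{\min}(\wtd G_1)-\sigma_{\max}(G_2)=\delta$), and $c=\pi/2$ in general by \eqref{ineq:CpldSylv-UI'-1}. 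Because $\begin{bmatrix} U_2 & 0 \\ 0 & V_2\end{bmatrix}$ has orthonormal columns, the submultiplicativity $\|XYZ\|_{\UI}\le\|X\|_2\|Y\|_{\UI}\|Z\|_2$ immediately gives $\left\|\begin{bmatrix} U_2^{\HH}R & 0 \\ 0 & V_2^{\HH}S\end{bmatrix}\right\|_{\UI}\le\left\|\begin{bmatrix} R & 0 \\ 0 & S\end{bmatrix}\right\|_{\UI}$.

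Finally I would identify the left-hand norm above with $\left\|\begin{bmatrix} \sin\Theta(\cR(U_1),\cR(\wtd U_1)) & 0 \\ 0 & \sin\Theta(\cR(V_1),\cR(\wtd V_1))\end{bmatrix}\right\|_{\UI}$: from $\wtd U_1=U_1(U_1^{\HH}\wtd U_1)+U_2P$ together with $\wtd U_1^{\HH}\wtd U_1=I$ one gets $(U_1^{\HH}\wtd U_1)^{\HH}(U_1^{\HH}\wtd U_1)+P^{\HH}P=I$, so the singular values of $P$ are exactly the numbers $\sqrt{1-\cos^2\theta_i}=\sin\theta_i$, i.e., the diagonal entries of $\sin\Theta(\cR(U_1),\cR(\wtd U_1))$, and similarly for $Q$. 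Since a unitarily invariant norm depends only on the multiset of singular values, the two block norms agree, and chaining the three displays proves the theorem.

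The argument is mostly routine once the projection is set up; the points that need care are the $\sv$-versus-$\sv_{\ext}$ bookkeeping when $m\ne n$ (so that the $0$ belonging to $\sv_{\ext}(G_2)$ is accounted for and the $\delta$ in \eqref{ineq:stew-F} is invoked correctly), and the observation that the system obtained after projection is closed in $(P,Q)$ with a unique solution, so that the bounds of \Cref{lm:BdCpldSylv} genuinely apply to the pair of projected subspace coordinates rather than to some other solution.
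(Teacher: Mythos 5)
Your proof is correct and follows essentially the same route as the paper: project $R$ and $S$ through $U_2^{\HH}$ and $V_2^{\HH}$ to obtain coupled Sylvester equations in $U_2^{\HH}\wtd U_1$ and $V_2^{\HH}\wtd V_1$, apply \Cref{lm:BdCpldSylv} with the appropriate item for each case of $c$, absorb the projection by the orthonormality of $U_2$ and $V_2$, and identify the singular values of the projected blocks with the sines of the canonical angles. The only difference is cosmetic — you spell out the $\sin\Theta$ identification from $\wtd U_1^{\HH}\wtd U_1=I$ whereas the paper simply cites it — so there is nothing to flag.
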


The case for $\|\cdot\|_{\UI}=\|\cdot\|_{\F}$ is not new and Stewart and Sun \cite[p.260]{stsu:1990} credited it to Wedin \cite{wedi:1972}
with a slightly
different $\delta$ (similar to \eqref{eq:delta-wedi1972} we commented moments ago). However, Wedin \cite{wedi:1972} did not
explicitly mention it for the case. Evidently, Wedin could easily had it because of the machinery  he had already built in the paper.

\begin{proof}[Proof of \Cref{thm:more-sin(theta)-SVD}]
It can be seen that
$$
U_2^{\HH}R = G_2 V_2^{\HH}\wtd V_1-U_2^{\HH}\wtd U_1\wtd G_1, \quad
V_2^{\HH}S = G_2^{\HH}U_2^{\HH}\wtd U_1-V_2^{\HH}\wtd V_1\wtd G_1^{\HH}.
$$
Or, equivalently,
$$
(U_2^{\HH}\wtd U_1)\wtd G_1-G_2 (V_2^{\HH}\wtd V_1)=-U_2^{\HH}R, \quad
(V_2^{\HH}\wtd V_1)\wtd G_1^{\HH}-G_2^{\HH}(U_2^{\HH}\wtd U_1)=-V_2^{\HH}S,
$$
which takes the form of the coupled Sylvester equations \eqref{eq:CpldSyvl} with $X=U_2^{\HH}\wtd U_1$ and $Y=V_2^{\HH}\wtd V_1$.
Noticing that the singular values of $U_2^{\HH}\wtd U_1$ and those of $V_2^{\HH}\wtd V_1$ are the same as the sines of the canonical angles
between $\cR(U_1)$ and $\cR(\wtd U_1)$ and between $\cR(V_1)$ and $\cR(\wtd V_1)$, respectively, and hence \cite{stsu:1990,li:1993d,li:1994a}
$$
\left\|\begin{bmatrix}
          U_2^{\HH}\wtd U_1 & 0 \\
           0 & V_2^{\HH}\wtd V_1
        \end{bmatrix}\right\|_{\UI}=
\left\|\begin{bmatrix}
          \sin\Theta(\cR(U_1),\cR(\wtd U_1)) & 0 \\
           0 & \sin\Theta(\cR(V_1),\cR(\wtd V_1))
        \end{bmatrix}\right\|_{\UI},
$$
and noticing
$$
\left\|\begin{bmatrix}
          -U_2^{\HH}R & 0 \\
           0 & -V_2^{\HH}S
        \end{bmatrix}\right\|_{\UI}
=\left\|-\begin{bmatrix}
          U_2^{\HH} & 0 \\
           0 & V_2^{\HH}
        \end{bmatrix}
\begin{bmatrix}
          R & 0 \\
           0 & S
        \end{bmatrix}\right\|_{\UI}
\le\frac 1{\delta}\left\|\begin{bmatrix}
                        R & 0 \\
                        0 & S
                      \end{bmatrix}\right\|_{\UI},
$$
the conclusion in the theorem is a simple consequence of \Cref{lm:BdCpldSylv}.
\end{proof}

\section{Concluding Remarks}\label{sec:concl}
Stewart's original theorem \cite[Theorem~6.4]{stew:1973} for the singular subspaces associated with the SVD of a matrix
subject to some perturbations uses both the Frobenius and spectral norms. Although it is still versatile to apply in
the situations such as only the spectral norm is suitable \cite{zhli:2024}, it may lead
to weaker results: stronger conditions and yet less sharp bounds, through equivalency bounds between the spectral norm and the Frobenius norm,
as we argued in \Cref{sec:intro}. Consequently it pays to establish variants of Stewart's original theorem
from scratch. Our main contribution in \Cref{thm:SVD-almost} provides perturbation bounds that encompass a variety of circumstances
and that are dictated by the conditions as specified by \Cref{lm:veri-stew-bT}, which are further explained in \Cref{rk:veri-stew-bT}.

\Cref{lm:BdCpldSylv} collects bounds, some old and some new, on the solution to
the set of coupled Sylvester equations \eqref{eq:CpldSyvl} under different circumstances. They form a part of the foundation
based on which our main results in \Cref{thm:SVD-almost} are derived. Furthermore \Cref{lm:BdCpldSylv} can be put into good use to
establish more generalized $\sin\theta$ theorems for SVD as exemplified in \Cref{thm:more-sin(theta)-SVD}, beyond existing ones due to Wedin~\cite{wedi:1972}.

\def\noopsort#1{}\def\l{\char32l}\def\v#1{{\accent20 #1}}
  \let\^^_=\v\def\hbk{hardback}\def\pbk{paperback}
  \providecommand{\href}[2]{#2}
\providecommand{\arxiv}[1]{\href{http://arxiv.org/abs/#1}{arXiv:#1}}
\providecommand{\url}[1]{\texttt{#1}}
\providecommand{\urlprefix}{URL }


\end{document}